\documentclass[12pt, reqno]{amsart}

\usepackage{amsmath,amsfonts,amsthm,amssymb,color,hyperref}

\usepackage{mathrsfs}

\usepackage[T1]{fontenc}

\usepackage{graphicx}
\usepackage{showlabels}
\usepackage{subfigure} 

\makeatletter
     \def\section{\@startsection{section}{1}%
     \z@{.7\linespacing\@plus\linespacing}{.5\linespacing}%
     {\bfseries
     \centering
     }}
     \def\@secnumfont{\bfseries}
     \makeatother

\usepackage{graphicx}



\newcommand{\R}{\mathbb R}
\newcommand{\RR}{\mathbb R}
\newcommand{\N}{\mathbb N}

\newcommand{\E}{\mathbb E}


\newcommand{\HH}{\mathfrak H}



\setlength{\textheight}{19.5 cm}
\setlength{\textwidth}{14 cm}
\newtheorem{theorem}{Theorem}[section]
\newtheorem{lemma}[theorem]{Lemma}
\newtheorem{proposition}[theorem]{Proposition}

\theoremstyle{definition}

\theoremstyle{remark}
\newtheorem{remark}{Remark}
\numberwithin{equation}{section}
\setcounter{page}{1}

\begin{document}
\title[A CLT for the stochastic wave equation with fractional noise]{A Central Limit Theorem for the stochastic wave equation with fractional noise}

\author[F. Delgado-Vences]{Francisco Delgado-Vences}
\address{Conacyt  Research Fellow - Universidad Nacional Aut\'onoma de M\'exico. Instituto de Matem\'aticas, Oaxaca, M\'exico}
\email{delgado@im.unam.mx}

\author[D. Nualart]{David Nualart} \thanks {D. Nualart is supported by NSF Grant DMS 1811181.}
\address{University of Kansas, Department of Mathematics, USA}
\email{nualart@ku.edu}

\author[G. Zheng]{Guangqu Zheng}
\address{University of Kansas, Department of Mathematics, USA}
\email{zhengguangqu@gmail.com}

\begin{abstract}
We study the    one-dimensional stochastic wave  equation driven by a   Gaussian  multiplicative  noise, which is white in time and  has the covariance of a fractional Brownian motion with Hurst parameter $H\in  [1/2,1)$ in the spatial variable. We show that the  normalized spatial average of the solution over $[-R,R]$ converges in total variation distance to a   normal distribution, as $R$ tends to infinity.   We also provide a functional central limit theorem.
\end{abstract}

\maketitle

\medskip\noindent
{\bf Mathematics Subject Classifications (2010)}: 	60H15, 60H07, 60G15, 60F05.

\medskip\noindent
{\bf Keywords:} Stochastic wave equation, central limit theorem, Malliavin calculus, Stein's method. 

\allowdisplaybreaks

\section{Introduction}

We consider the one-dimensional stochastic wave equation  
\begin{equation}
\label{eq:heat-equation}
\frac{\partial^2 u}{\partial t^2} =  \frac {\partial ^2 u} {\partial  x^2} + \sigma(u)  \frac { \partial  ^2 W} {\partial t \partial x},
\end{equation}
on $\R_+\times \R$, where $W(t,x)$ is a Gaussian random field that is  a Brownian motion in time and  behaves as a fractional Brownian motion with Hurst parameter $H \in [1/2,1)$ in the spatial variable.
For $H=1/2$, the random field $W$ is just a two-parameter Wiener process on $\R_+\times \R$. We assume   $u(0,x)=1$, $\frac {\partial  } {\partial t} u(0,x)=0$ and
  $\sigma$ is a Lipschitz function with Lipschitz constant $L\in(0,\infty)$.

It is well-known (see, for instance,  \cite{Dalang})
that  equation \eqref{eq:heat-equation} has a unique  \emph{mild solution}, which is adapted to the filtration generated by $W$, such that $\sup\big\{  \E \big[  \vert u(t,x)\vert^2\big]\,:\,   x\in \RR, t\in[0,T] \big\}      < \infty$ and  
\begin{equation}\label{eq: mild}
u(t,x) = 1+  \frac 12 \int_0^t \int_{\RR}  \mathbf{1}_{\{ |x-y| \le t-s\}} \sigma(u(s,y))W(ds,dy)\,, 
\end{equation}
where   the above stochastic integral is defined in the sense of It\^o-Walsh. 

In this paper, we are interested in the asymptotic behavior as $R$ tends to infinity of the spatial averages 
\begin{equation} \label{1}
\int_ {-R}^R u(t,x)dx,
\end{equation}
where $t>0$ is fixed and $u(t,x)$ is the solution to \eqref{eq:heat-equation}.  We remark that, for each fixed $t>0$, the process $\{ u(t,x), x\in \RR\}$ is \emph{strictly stationary}\footnote{ To see the strict stationarity, we fix $y\in\R$ and put $v(t,x) = u(t,x+y)$:  It is clear that $v$ solves the stochastic heat equation \eqref{eq:heat-equation} driven by  the shifted noise $\{ W(t,x+y), t\in\R_+, x\in\R\} $, which has stationary increments in the spatial variable. },  meaning that the finite-dimension distributions of the process $\{u(t,x+y),x\in \RR\}$ do not depend on $y$.   Furthermore, $u(t,x)$ is measurable with respect to the $\sigma$-field generated by the random variables
$\{W(s,z): |x-z| \le t-s\}$.   As a consequence, 
\begin{enumerate}
\item for $H=1/2$,  the  random variables $u(t,x)$ and $u(t,y)$ are independent if $|x-y| > 2t$; 

\item for $H\in (1/2,1)$, $u(t,x)$ and $u(t,y)$ have a correlation that decays like $|x-y-2t| ^{2H-2}$ when $|x-y|\to +\infty$, which is a consequence of Gebelein's inequality (see, for instance,  \cite{V}). 
\end{enumerate} 
Therefore, we expect the Gaussian fluctuation of the   spatial averages \eqref{1}. 

\medskip

Our first goal is to apply the  methodology of Malliavin-Stein to provide a quantitative central limit theorem for \eqref{1}, which will be described in total variation distance.

Define the normalized averages by
 \begin{equation}
\label{eq:quantity-of-interest}
F_R(t):= \frac{1}{\sigma_R}\left(\int_ {-R}^R u(t,x)dx- 2R\right),
\end{equation}
where  $u(t,x)$ is the solution to \eqref{eq:heat-equation} and $\sigma_R^2 ={\displaystyle  {\rm Var}\left(\int_{-R}^R u(t,x)dx\right) }$.

   To avoid triviality, throughout  this paper, we assume that   $\sigma(1)\not =0$, which  guarantees that $\sigma_R>0$ for all $R>0$ and also that  $\sigma_R$  is of order $R^{H}$;  see Lemma \ref{lema1} and Propositions \ref{pro:covariance1}, \ref{pro:covariance2}  below.

 \medskip
 
 Our first result is the following quantitative central limit theorem.
\begin{theorem}
\label{thm:TV-distance}  Let $d_{\rm TV}$ denote the total variation distance $($see \eqref{TVDIST}$)$ and let $Z\sim  \mathcal{N}(0,1)$. For any fixed $t> 0$,  there exists a constant $C= C_{t,H,\sigma}$, depending on $t, H$ and $\sigma$, such that 
$$
d_{\rm TV}\left( F_R(t),Z\right) \leq C R^{H-1} \,.
$$
\end{theorem}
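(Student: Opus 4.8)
The plan is to run the Malliavin--Stein method. Since $F_R(t)$ has, by construction, mean zero and unit variance, it is natural to exploit the mild formulation \eqref{eq: mild}: writing $G_R=\int_{-R}^R\big(u(t,x)-1\big)\,dx$, Fubini's theorem turns $G_R$ into an It\^o--Walsh (hence Skorohod) integral $G_R=\delta(v_R)$, where
\[
v_R(s,y)=\tfrac12\,\sigma\big(u(s,y)\big)\,\ell_R(s,y),\qquad \ell_R(s,y)=\int_{-R}^R \mathbf 1_{\{|x-y|\le t-s\}}\,dx .
\]
For a centered Skorohod integral $F=\delta(v)$ with variance $\sigma_F^2$, the Malliavin--Stein method yields
\[
d_{\rm TV}\!\left(F/\sigma_F,\,Z\right)\le \frac{2}{\sigma_F^{2}}\sqrt{\operatorname{Var}\big(\langle DF,\,v\rangle_{\HH}\big)},
\]
so applying it to $F_R=G_R/\sigma_R$ reduces everything to proving $\operatorname{Var}\big(\langle DG_R,v_R\rangle_{\HH}\big)\le C\,R^{6H-2}$. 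Indeed, since $\sigma_R$ is of order $R^{H}$ by Lemma \ref{lema1} and Propositions \ref{pro:covariance1}, \ref{pro:covariance2}, and $\sqrt{R^{6H-2}}=R^{3H-1}$, this gives $\sigma_R^{-2}\sqrt{\operatorname{Var}(\cdots)}\lesssim R^{-2H}\cdot R^{3H-1}=R^{H-1}$, which is the asserted rate.

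The key analytic inputs are the moment and localization estimates for the Malliavin derivative of the solution. Differentiating \eqref{eq: mild} shows that $D_{s,y}u(t,x)$ solves a linear stochastic wave equation with source term $\tfrac12\mathbf 1_{\{|x-y|\le t-s\}}\sigma(u(s,y))$; a Picard/Gr\"onwall argument then gives the uniform bounds $\sup_{t\le T,\,x}\E\big[|D_{s,y}u(t,x)|^p\big]<\infty$, together with the crucial fact that $D_{s,y}u(t,x)$ is supported on the light cone $\{|x-y|\le t-s\}$. The same scheme controls the second derivative $D^2u$. Combined with the linear growth of $\sigma$ and the uniform second moments of $u$, these provide $L^p$-control of every factor appearing in $v_R$ and in $DG_R=\int_{-R}^R Du(t,x)\,dx$.

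To estimate the target variance I would use the Poincar\'e inequality $\operatorname{Var}(X)\le \E\|DX\|_{\HH}^2$ with $X=\langle DG_R,v_R\rangle_{\HH}$, and then apply the Leibniz rule
\[
D_{\theta,\eta}\langle DG_R,v_R\rangle_{\HH}=\langle D_{\theta,\eta}DG_R,\,v_R\rangle_{\HH}+\langle DG_R,\,D_{\theta,\eta}v_R\rangle_{\HH},
\]
which brings in $D^2u$ and $\sigma'(u)$. Expanding the resulting $\HH$-norm and applying Cauchy--Schwarz with the moment bounds above reduces the problem to multiple space-time integrals whose spatial part is a product of Riesz kernels $|y-y'|^{2H-2}$ (the density of $\HH_0$ for $H\in(1/2,1)$, a point mass for $H=1/2$) weighted by the light-cone indicators and the lengths $\ell_R$. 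The cone support ties each derivative variable to within $O(t)$ of the macroscopic variable $x$ (resp.\ $\bar x$); the two global variables $x,\bar x\in[-R,R]$ are coupled only through a long-range kernel, contributing $\int_{[-R,R]^2}|x-\bar x|^{2H-2}\,dx\,d\bar x\asymp R^{2H}$, while each noise variable $y'$ paired to a cone-localized variable but ranging over $[-R,R]$ via $\ell_R$ contributes $\int_{-R}^R|y-y'|^{2H-2}\,dy'\asymp R^{2H-1}$. There are two such factors, yielding $R^{2H}\cdot(R^{2H-1})^2=R^{6H-2}$, exactly as required.

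I expect the main obstacle to be this last step: the precise bookkeeping of the four-fold space-time integral, i.e.\ identifying how many factors of $R$ survive once the cone-localized variables are integrated out, and verifying the integrability in the difference variables of products of two Riesz kernels $|\cdot|^{2H-2}$. This is where the constraint $H<1$ is essential, since the bound degrades as $H\to1$ (the kernel becomes non-integrable near the diagonal only in the limit), and it is also where the cases $H=1/2$ (white noise, $\HH_0=L^2$) and $H\in(1/2,1)$ must be handled on the same footing. Establishing the uniform $L^p$ and localization bounds for $D^2u$, and absorbing the nonlinearity $\sigma$ through its Lipschitz constant without losing the correct power of $R$, constitute the bulk of the technical work supporting this count.
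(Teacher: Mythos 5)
Your overall skeleton coincides with the paper's: the Malliavin--Stein bound $d_{\rm TV}(F_R,Z)\le 2\sqrt{{\rm Var}\langle DF_R,v_R\rangle_{\HH}}$ applied to $F_R=\delta(v_R)$, the lower bound $\sigma_R\gtrsim R^{H}$, the light-cone localization $\|D_{s,y}u(t,x)\|_p\le C\mathbf{1}_{\{|x-y|\le t-s\}}$ as the key analytic input, and the power count ${\rm Var}\langle DG_R,v_R\rangle_{\HH}\lesssim R^{6H-2}$, which indeed yields the rate $R^{H-1}$ and matches the order of the paper's integrals $\mathbf{I}$ and $\mathbf{J}$.

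The genuine gap is in how you propose to estimate that variance. You apply the Poincar\'e inequality to $X=\langle DG_R,v_R\rangle_{\HH}$, whose Malliavin derivative involves $D^2u$ and a derivative of $\sigma'(u(r,z))$ (equivalently, of the process $\Sigma(r,z)$ appearing in \eqref{ecu1}). Under the standing hypothesis that $\sigma$ is merely Lipschitz, $u(t,x)\in\mathbb{D}^{1,p}$ but there is no reason for $u(t,x)\in\mathbb{D}^{2,p}$: differentiating the linear equation satisfied by $Du$ requires differentiating $\Sigma$, which needs $\sigma'$ to exist and itself be (at least) Lipschitz. So the assertion that ``the same scheme controls the second derivative $D^2u$'' fails at the stated level of generality, and your argument proves the theorem only for smoother $\sigma$ (one could try to smooth $\sigma$ and use lower semicontinuity of $d_{\rm TV}$, but you would then have to track uniformity of all constants and of $\sigma_R$ in the smoothing parameter, none of which is addressed). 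The paper avoids second derivatives entirely: it substitutes the explicit equation \eqref{ecu1} for $\int_{-R}^R D_{s,y}u(t,x)\,dx$ into $\langle DF_R,v_R\rangle_{\HH}$, splits the result as $B_1+B_2$, bounds ${\rm Var}(B_2)$ by the It\^o isometry (whose integrand contains only $Du$), and bounds ${\rm Var}(B_1)$ --- a covariance of products of $\sigma(u)$'s --- by the two-parameter Clark--Ocone formula, which again calls only on first derivatives through the Lipschitz chain rule $D(\sigma(u))=\Sigma\,Du$. If you replace your single Poincar\'e step by this decomposition, the rest of your outline (localization, Riesz-kernel bookkeeping, the $R^{2H}\cdot(R^{2H-1})^2$ count) goes through as you describe.
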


Our second objective is to provide  the functional version of Theorem \ref{thm:TV-distance}. 
 
\begin{theorem}
\label{thm:functional-CLT}
For any $s>0$,  we set  $\eta(s)= \E\big[  \sigma(u(s,y)) \big]$ and $\xi(s)= \E\big[ \sigma^2(u(s,y)) \big]$, which do not depend on $y$ due to the stationarity.
Then, for any $T>0$, as $R\to+\infty$, 
\begin{itemize}
\item[(i)] if $H=  1/2$, then
$$
\left\{ \frac{1}{\sqrt{R}}\left(\int_{-R}^R u(t,x)dx - 2R\right) \right\}_{t\in [0,T]} \Rightarrow \left\{ \sqrt{2} \int_0^t  (t-s) \sqrt{\xi(s)}  dB_s\right\}_{t\in [0,T]};
$$
\item[(ii)] if $H\in (1/2,1)$, then
$$
\left\{  R^{-H} \left(\int_{-R}^R u(t,x)dx - 2R\right) \right\}_{t\in [0,T]} \Rightarrow \left\{ 2^H \int_0^t  (t-s) \eta(s)  dB_s\right\}_{t\in [0,T]} \,.
$$
\end{itemize}
 Here  $B$ is a standard Brownian motion and the above weak convergence takes place in the space of continuous functions $C([0,T])$.
\end{theorem}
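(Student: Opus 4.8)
The plan is to deduce the functional statement from the two classical ingredients for weak convergence in $C([0,T])$: convergence of the finite-dimensional distributions and tightness. Both rest on the stochastic-Fubini representation obtained by integrating the mild formulation \eqref{eq: mild} over $x\in[-R,R]$ and exchanging the order of integration,
$$\int_{-R}^R u(t,x)\,dx - 2R = \int_0^t\!\int_\R \Phi_{R,t}(s,y)\,\sigma(u(s,y))\,W(ds,dy), \qquad \Phi_{R,t}(s,y):=\frac12\int_{-R}^R \mathbf 1_{\{|x-y|\le t-s\}}\,dx,$$
so that $\Phi_{R,t}(s,y)$ equals $(t-s)$ whenever the light cone $[y-(t-s),y+(t-s)]$ sits inside $[-R,R]$ and decreases to $0$ near the boundary. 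This exhibits the normalized average $G_R(t)$ (the left-hand side of (i) or (ii), with normalization $c_R=\sqrt R$ or $R^H$) as a divergence integral $G_R(t)=\delta(c_R^{-1}\Phi_{R,t}\,\sigma(u))$, which is exactly the form to which the Malliavin--Stein bound behind Theorem \ref{thm:TV-distance} applies.

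For the finite-dimensional distributions I would fix $0\le t_1<\cdots<t_n\le T$ and, via the Cram\'er--Wold device, reduce the convergence of $(G_R(t_1),\dots,G_R(t_n))$ to that of a single linear combination $\sum_i a_i G_R(t_i)=\delta\big(\sum_i a_i c_R^{-1}\Phi_{R,t_i}\sigma(u)\big)$. Since this is again a divergence integral, the univariate estimate used for Theorem \ref{thm:TV-distance} applies verbatim and yields $d_{\rm TV}\big(\sum_i a_i G_R(t_i),\, \mathcal N(0,\tau_R^2)\big)\le C\,\tau_R^{-2}\sqrt{\mathrm{Var}\big(\langle D\delta(w_R),w_R\rangle_{\mathcal H}\big)}$, with $w_R=\sum_i a_i c_R^{-1}\Phi_{R,t_i}\sigma(u)$ and $\tau_R^2=\E[(\sum_i a_i G_R(t_i))^2]$. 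The convergence then follows once (a) $\tau_R^2$ converges and (b) the variance term tends to $0$; step (b) repeats the key estimate of Theorem \ref{thm:TV-distance} for mixed times. Step (a) is what identifies the limit: computing $\lim_R \mathrm{Cov}(G_R(t),G_R(t'))$ via the It\^o isometry leads, for $H\in(1/2,1)$, to $\int_0^{t\wedge t'}(t-s)(t'-s)\,\E[\sigma(u(s,y))\sigma(u(s,y'))]\,\alpha_H\!\iint|y-y'|^{2H-2}$-type integrals with $\alpha_H=H(2H-1)$; as $R\to\infty$ the kernel $|y-y'|^{2H-2}$ forces $y,y'$ apart, the correlation factorizes into $\eta(s)^2$, the double spatial integral of $|y-y'|^{2H-2}$ over $[-R,R]^2$ contributes $\alpha_H^{-1}(2R)^{2H}$, and after the $R^{-2H}$ normalization the $\alpha_H$ cancels, leaving the limit covariance $2^{2H}\int_0^{t\wedge t'}(t-s)(t'-s)\eta(s)^2\,ds$, which matches the candidate process in (ii). The regime $H=1/2$ is where the dichotomy appears: there the spatial covariance is a Dirac mass, only the diagonal $y=y'$ survives, $\E[\sigma^2(u(s,y))]=\xi(s)$ replaces $\eta(s)^2$, and the scaling $\int \Phi_{R,t}^2\,dy\sim 2R$ produces the limit covariance $2\int_0^{t\wedge t'}(t-s)(t'-s)\xi(s)\,ds$ of the process in (i).

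Tightness I would obtain from the Kolmogorov--Chentsov criterion by establishing a uniform-in-$R$ increment bound $\E\big[|G_R(t)-G_R(t')|^p\big]\le C_{p,T}\,|t-t'|^{1+\beta}$ for some $\beta>0$ and $p$ large. For $t'<t$ the increment splits, through the representation above, into $\int_0^{t'}\!\int_\R (\Phi_{R,t}-\Phi_{R,t'})\,\sigma(u)\,W(ds,dy)$ and $\int_{t'}^{t}\!\int_\R \Phi_{R,t}\,\sigma(u)\,W(ds,dy)$. Since $\Phi_{R,t}-\Phi_{R,t'}$ is supported on the region where the two light cones differ and is bounded by $C(t-t')$, while $\Phi_{R,t}(s,\cdot)\le C(t-s)$ on $(t',t)$, each piece carries an extra factor proportional to $t-t'$. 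I would estimate the $p$-th moments by combining the Burkholder--Davis--Gundy inequality for the It\^o--Walsh integral with Minkowski's inequality, thereby reducing matters to the deterministic space-time integrals $\int_0^t\!\iint |\Phi||\Phi'|\,|y-y'|^{2H-2}$ (respectively $\int_0^t\!\int\Phi^2$ when $H=1/2$); the normalization $R^{-2H}$ (respectively $R^{-1}$) cancels the spatial growth exactly as in the variance computation, leaving a bound of order $|t-t'|^2$ at the second-moment level and $|t-t'|^{p}$ after the moment estimate, which comfortably satisfies Kolmogorov's condition.

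The step I expect to be the main obstacle is tightness, and specifically the uniform-in-$R$ control of the $p$-th moments of the increments: one must verify that the stochastic-integral estimates genuinely return a power $|t-t'|^{1+\beta}$ that survives the normalization, which means tracking how the two extra $(t-t')$ factors interact with the $R$-dependent spatial integrals in each regime. A related subtlety is that $\sigma(u(s,y))$ does not live in a fixed Wiener chaos, so the passage from second to $p$-th moments cannot invoke hypercontractivity and must instead go through Burkholder--Davis--Gundy, Minkowski, and the a priori moment bounds for $u$, with constants kept uniform over $R$ and over $t,t'\in[0,T]$.
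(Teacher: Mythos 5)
Your proposal is correct and follows essentially the same route as the paper: the Walsh/Skorohod representation of $G_R(t)$, the covariance asymptotics (decorrelation of $\sigma(u(s,y))$ at large spatial separation yielding $\eta^2(s)$ for $H>1/2$ versus $\xi(s)$ on the diagonal for $H=1/2$), a Malliavin--Stein bound reducing the finite-dimensional convergence to the vanishing of ${\rm Var}\big(\langle DF_R(t_i), v_R^{(j)}\rangle_{\HH}\big)$ via the same mixed-time estimates as in Theorem \ref{thm:TV-distance}, and tightness through a uniform increment bound of order $R^{pH}|t-t'|^p$ obtained from Burkholder--Davis--Gundy and Minkowski, followed by Kolmogorov's criterion. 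The only packaging differences are that the paper invokes a multivariate Malliavin--Stein lemma (Lemma \ref{lemma: NP 6.1.2}) in place of your Cram\'er--Wold reduction --- which avoids having to treat separately the degenerate case where the limiting variance of a linear combination vanishes --- and that, for $H>1/2$, it controls the $\HH_0$-norm of the increment via the embedding $L^{1/H}(\R)\hookrightarrow \HH_0$ rather than by bounding the kernel integral $\iint |y-y'|^{2H-2}$ directly; both devices lead to the same estimates.
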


Theorem  \ref{thm:TV-distance} is proved using a combination of Stein's method for normal approximation and Malliavin calculus, following the ideas introduced by Nourdin and Peccati in \cite{NP}. The main idea is as follows.  The total variation distance $d_{\rm TV}\left( F_R(t),Z\right) $ is bounded by $2 \sqrt{  {\rm Var} \langle DF_R(t), v_R \rangle_\HH}$, where $D$ is the derivative in the sense of Malliavin calculus,  $\HH$ is the Hilbert space
associated to the noise $W$ and $v_R$ is an $\HH$-valued random variable such that $F_R(t) =\delta(v_R)$, $\delta$ being the adjoint of the derivative operator, called the divergence or the Skorohod integral. A key new ingredient in the application of this approach is to use the representation of $F_R(t)$ as a stochastic integral   of $v_R$, taking into account that the It\^o-Walsh integral is a particular case of the Skorohod integral.

A similar problem for the stochastic heat equation on $\R$  has been recently  considered in  \cite{HNV18}, but only in the case of a space-time white noise. In this case, it was proved in \cite{HNV18} that  the limiting process in the functional central limit theorem is a martingale, which is not true for our wave equation.  Moreover, in the colored case $H\in (1/2,1)$  considered here, we have found the  surprising result that  the square moment  $ \E[ \sigma^2(u(s,y)) ]$  in the white noise case is replaced by the square of the first moment   $(\E[  \sigma(u(s,y)) ])^2$. Furthermore, the rate of convergence depends on the Hurst parameter $H$.

When $\sigma(u)=u$, the solution has an explicit Wiener chaos expansion. A natural question in this case is {\it whether the central limit is chaotic}, meaning that the projection on each Wiener chaos contributes to the limit. Such a phenomenon has been observed in other cases (see, for instance, \cite{HN}). We will show that for $H>1/2$ only the first chaos contributes to the limit, where as for $H=1/2$, we will see in  Remark  \ref{rem:nonchaotic} that the first chaos is not the only contributor in the limit and to check whether or not this central limit is chaotic, one shall go through the usual arguments for chaotic central limit theorem (see \cite[Section 8.4]{Eulalia}).

\medskip

The rest of the paper is organized as follows. In Section \ref{sec:prel} we recall some preliminaries on   Malliavin calculus and Stein's method. Sections \ref{sec:thm1} and \ref{sec:thm2} are devoted to the proofs of our main theorems.  We put the proof of a technical lemma  (Lemma \ref{lemma: iteration}) in the appendix. This lemma, which has an independent interest,  states that the $p$-norm of the Malliavin derivative $D_{s,y} u(t,x)$ can be estimated, up to constant that depends on $p$ and $t$, by the fundamental solution of the  wave equation $\frac 12\mathbf{1}_{\{ |x-y| \le t-s\}}$. 

  Along the paper we will denote by $C$ a generic constant that might depend on the fixed time $t$, the Hurst parameter $H$ and the non-linear coefficient $\sigma$,  and it can vary from line to line.
  
\section{Preliminaries}\label{sec:prel}

 We denote by $W= \{ W(t,x), t\ge 0, x\in \R\}$ a centered Gaussian family of random variables  defined in some probability space $(\Omega, \mathcal{F}, P)$,  with covariance function given by 
\[
\E\left[W(t,x) W(s,y)\right] =   \frac{s\wedge  t }{  2} \big( |x|^{2H}  + |y|^{2H} - |x-y| ^{2H}\big),
\]
where $ H \in [1/2, 1)$.

Let  $\HH_0$ be the Hilbert space defined as the completion of the set of step functions on $\R$ equipped with the inner product
\begin{align} \label{corfctH}
\langle \varphi, \phi \rangle_{\HH_0} 
= \begin{cases}
  H(2H-1) {\displaystyle \int_{\R^2}    \varphi(x) \phi(y) | x-y| ^{2H-2} \,dxdy }\qquad\text{if $H\in (1/2 ,1)$,} \\ 
 {\displaystyle  \int_{\R}    \varphi(x) \phi(x) \, dx}, \quad\qquad\qquad \qquad\qquad\qquad\qquad \text{if $H=1/2$.}
   \end{cases}
\end{align}
 Set $\HH= L^2( \R_+ ; \HH_0)$ and  notice that 
\[
\E\left[W(t,x) W(s,y)\right]  = \langle  \mathbf{1}_{[0,t]\times [0,x]} , \mathbf{1}_{[0,s]\times [0,y]} \rangle_{\HH},
\]
where, by  convention, $[0,x] = [-|x|, 0]$ if $x$ is negative. Therefore, the mapping $(t,x) \rightarrow W(t,x)$ can be extended to a linear isometry between $\HH$ and
the Gaussian subspace of $L^2(\Omega)$ generated by $W$. We denote this isometry by $\varphi \longmapsto W(\varphi)$.  

When $H=  1/2$,  the space  $\HH$ is simply $L^2( \R_+ \times \R)$ and $W(\varphi)$ is the Wiener-It\^o integral of  $\varphi$:
\[
W(\varphi)= \int_{\R_+ \times \R} \varphi(t,x) W(dt,dx).
\]
For $H \in (  1/2, 1)$, the space $L^{1/H} (\R)$  is known to be  continuously embedded into   $\HH_0$; see \cite{MMV01, PipirasTaqqu}.

For any $t\ge 0$, we denote  by  ${\mathcal F}_t$  the $\sigma$-field generated by  the random variables
$\{ W(s,x) :0\le s\le
t, x\in \R \}$. 
Then, for any adapted  $\HH_0$-valued   stochastic process $\{X(t),\;  t\ge 0 \}$
such that
\begin{equation}  \label{inte}
\int_0^\infty   \E[ \| X(t) \|_{\HH_0} ^2] d t <\infty,
\end{equation}
the following   stochastic integral
\begin{equation} \label{DW}
\int_0^\infty \int_{\R} X(s,y) W(d s, d y)
\end{equation}
is well-defined and satisfies the isometry property
\[
\E \left[  \left( \int_0^\infty \int_{\R} X(s,y) W(d s, d y) \right)^2 \right] = \E \left  ( \int_0^\infty    \| X(t) \|_{\HH_0} ^2 d t \right)\, .
\]

We will make use of the following lemma and the notation $\alpha_H = H(2H-1)$.

\begin{lemma} \label{lem1}
For any $H\in (1/2, 1)$, $s,t \ge 0$ and $x,\xi \in \R$, we have
       \begin{align}  \notag
&2\alpha_H  \int_{\R^2}    \mathbf{1}_{\{|x-y| \le  t\}} \mathbf{1}_{\{| \xi-z| \le s \}}  | y-z| ^{2H-2}  dydz\\ \notag
&\qquad =  \big\vert   x-\xi -t- s \big\vert ^{2H} +  \big\vert  x-\xi+t+s \big\vert ^{2H} \\  \label{ecu2}
&\quad \quad\quad\quad\quad -  \big\vert x-\xi+  t - s  \big\vert ^{2H} -   \big\vert x-\xi - t+s  \big\vert ^{2H}.
 \end{align}
\end{lemma}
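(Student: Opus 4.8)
The plan is to evaluate the left-hand side directly as an iterated integral over the rectangle defined by the two indicators and then apply the fundamental theorem of calculus twice. Since $|x-y|\le t$ and $|\xi-z|\le s$ are equivalent to $y\in[x-t,x+t]$ and $z\in[\xi-s,\xi+s]$, the left-hand side of \eqref{ecu2} equals
\[
2\alpha_H\int_{x-t}^{x+t}\int_{\xi-s}^{\xi+s}|y-z|^{2H-2}\,dz\,dy.
\]
The only delicate point is the singularity of the kernel $|r|^{2H-2}$ at $r=0$; but because $H>1/2$ we have $2H-2>-1$, so the singularity is integrable and every integral below is finite.

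First I would record two antiderivatives. Differentiating $\operatorname{sgn}(r)|r|^{2H-1}$ gives $(2H-1)|r|^{2H-2}$, so
\[
\psi(r):=\frac{1}{2H-1}\operatorname{sgn}(r)|r|^{2H-1}
\]
is an antiderivative of $|r|^{2H-2}$; crucially $\psi$ is continuous at $0$ (since $2H-1>0$), so the fundamental theorem of calculus applies across the singularity. Likewise $\int\operatorname{sgn}(r)|r|^{2H-1}\,dr=\frac{1}{2H}|r|^{2H}$, so
\[
\Psi(r):=\frac{|r|^{2H}}{2H(2H-1)}=\frac{|r|^{2H}}{2\alpha_H}
\]
is an antiderivative of $\psi$, and it is of class $C^1$.

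Then I would integrate in $z$ first, via the substitution $r=y-z$, to obtain
\[
\int_{\xi-s}^{\xi+s}|y-z|^{2H-2}\,dz=\psi(y-\xi+s)-\psi(y-\xi-s).
\]
Integrating this in $y$ over $[x-t,x+t]$ and using $\Psi$ produces a telescoping combination of four boundary terms,
\[
\Psi(x-\xi+t+s)-\Psi(x-\xi-t+s)-\Psi(x-\xi+t-s)+\Psi(x-\xi-t-s).
\]
Multiplying by $2\alpha_H$ turns each $\Psi(r)$ into $|r|^{2H}$, which yields precisely the right-hand side of \eqref{ecu2}.

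There is essentially no deep obstacle here: the only thing to watch is the singularity of $|r|^{2H-2}$ at the origin. The point is that $\psi$ extends continuously through $0$, so the fundamental theorem of calculus is legitimate despite the integrand blowing up; the remaining sign bookkeeping in passing from $\psi$ and $\Psi$ back to $|\cdot|^{2H}$ is routine.
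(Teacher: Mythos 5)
Your computation is correct: the substitution giving $\psi(y-\xi+s)-\psi(y-\xi-s)$ for the inner integral, the second antiderivative $\Psi(r)=|r|^{2H}/(2\alpha_H)$, and the resulting four boundary terms all check out, and you rightly flag that the only point needing care is that $\psi$ is absolutely continuous through the origin (since $2H-1>0$), so the fundamental theorem of calculus is legitimate. Your route is, however, genuinely different from the paper's. The paper does no integration at all: it introduces a two-sided fractional Brownian motion $B^H$ and observes that, by the very definition of the inner product \eqref{corfctH}, the left-hand side equals $2\,\E\big[(B^H_{x+t}-B^H_{x-t})(B^H_{\xi+s}-B^H_{\xi-s})\big]$, while expanding this covariance via $\E[B^H_aB^H_b]=\tfrac12(|a|^{2H}+|b|^{2H}-|a-b|^{2H})$ gives exactly the right-hand side. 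That argument is shorter and conceptually transparent (it says the identity is just the covariance of two fBm increments, which is natural given how the noise $W$ is built), but it leans on the known equivalence between the kernel $\alpha_H|y-z|^{2H-2}$ and the fBm covariance. Your direct calculation is more elementary and self-contained --- it effectively reproves that equivalence in the special case of indicator functions of intervals --- at the cost of some bookkeeping with antiderivatives. Both are complete proofs.
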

\begin{proof}
Let $B^H$ be a two-sided  fractional Brownian motion with Hurst parameter $H$. That is, $B^H=\{ B^H_t, t \in \R\}$ is a centered Gaussian process with covariance
\[
\E[B^H_t B^H_s] = \frac 12 \left( |t|^{2H} + |s|^{ 2H} - |t-s| ^{2H} \right)\,, \,\,  s,t\in\RR \,.
\]
Notice that both sides of \eqref{ecu2} are equal to
$
2 \E\big[ (B^H_{x+t} - B^H_{x-t})( B^H_{\xi+s} - B^H_{\xi-s}) \big]
$, in view of \eqref{corfctH} and the above covariance structure.
  So the desired equality follows immediately. 
\end{proof}

The proof of our main theorems relies  on a combination of  Malliavin calculus and Stein's method. We will   introduce these tools in the next two subsections.  

 \subsection{Malliavin calculus} 

Now we recall some basic facts on Malliavin calculus associated with $W$. For a
detailed account of the Malliavin calculus with respect to a Gaussian process, we refer to Nualart \cite{Nualart}. 

 Denote by $C_p^{\infty}(\RR^n)$ the space of smooth functions with all their partial derivatives having at most polynomial growth at infinity. Let $\mathcal{S}$ be the space of simple functionals of the form 
$$
F = f(W(h_1), \dots, W(h_n))
$$ for $f\in C_p^{\infty}(\RR^n)$ and $h_i \in \HH$, $1\leq i \leq n$. Then, $DF$ is the $\HH$-valued random variable defined by
\begin{align*}
DF=\sum_{i=1}^n  \frac {\partial f} {\partial x_i} (W(h_1), \dots, W(h_n)) h_i\,.
\end{align*}
 The derivative operator $D$  is   closable   from $L^p(\Omega)$ into $L^p(\Omega;  \HH)$ for any $p \geq1$ and   we let $\mathbb{D}^{1,p}$ be the completion of $\mathcal{S}$ with respect to the norm
$$
\|F\|_{1,p} = \left(\E\big[ |F|^p \big] +   \E\big[  \|D F\|^p_\HH \big]   \right)^{1/p} \,.
$$
We denote by $\delta$ the adjoint of  $D$ given by the duality formula
$$
\E(\delta(u) F) = \E( \langle u, DF \rangle_\HH)
$$
for any $F \in \mathbb{D}^{1,2}$ and $u\in{\rm Dom} \, \delta \subset L^2(\Omega; \HH)$,  the domain of $\delta$. The operator $\delta$ is
also called the Skorohod integral,   because in the case of the Brownian motion, it coincides
with an extension of the It\^o integral introduced by Skorohod (see \cite{GT, NuPa}). 
More generally, in the context of our  Gaussian   noise $W$, any   adapted random field $X$ that satisfies (\ref{inte}) belongs to the domain of $\delta$ and $\delta (X)$ coincides with the Dalang-Walsh-type stochastic integral (\ref{DW}):
\[
\delta (X) = 
\int_0^\infty \int_{\R} X(s,y) W(d s, d y).
\]
As a consequence,  the mild formulation equation   \eqref{eq: mild} can  also be written as 
\begin{equation}\label{eq: mild Skorohod}
u(t,x) = 1 + \frac{1}{2} \delta\Big(  \mathbf{1}_{ \{\vert x- \ast \vert \le t -\cdot\} } \sigma\big(  u(\cdot, \ast) \big)\Big).
\end{equation}

It is known   that for any  $(t,x)\in\RR_+\times \RR$, the solution $u(t,x)$ to equation  (\ref{eq:heat-equation}) belongs to $\mathbb{D}^{1,p}$ for any $p\ge 2$ and the derivative satisfies the following linear  stochastic integral differential equation for $t\ge s$,
\begin{align}  \notag
D_{s,y}u(t,x) &=  \frac 12 \mathbf{1}_{\{ |x-y| \le t-s\}}   \sigma(u(s,y)) \\  \label{ecu1}
& \qquad +  \frac 12 \int_s^t \int_{\R}  \mathbf{1}_{\{ |x-z| \le t-r \}}    \Sigma(r,z) D_{s,y} u(r,z) W(dr,dz),
\end{align}
where $\Sigma(r,z)$ is an adapted process, bounded by the Lipschitz constant of $\sigma$
(we refer to the appendix for more details on the properties of the derivative). If $\sigma $ is continuously differentiable, then
$\Sigma(r,z)= \sigma'(u(r,z))$. This result is proved in    \cite[Proposition 2.4.4]{Nualart}  in the case of  the  stochastic heat equation with  Dirichlet boundary conditions on $[0,1]$ driven by a space-time white noise. Its proof can be easily extended to the wave  equation on $\R$ driven by the colored noise $W$. We also refer to   \cite{CHN18,NQ}  for additional references, where this result is used  for  $\sigma\in C^1(\RR)$.

In the end of this subsection, we record a technical result that is   essential for our arguments, and we postpone its proof to the Appendix. 

 \begin{lemma}\label{lemma: iteration}
  For any $p\in[ 2,+\infty)$,  $0  \leq  t  \le T $ and $x\in \R$, we have for almost every $(s,y)\in[0,T]\times \R$,
 \begin{equation}\label{ecu3}
  \| D_{s,y} u(t,x) \|_p   \le   C  \mathbf{1}_{\{ |x-y| \le t-s\}}
 \end{equation}
  for some constant $C = C_{T,p,H,\sigma}$ that depends on $T,p,H$ and the  function $\sigma$. 
  \end{lemma}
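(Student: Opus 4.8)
The plan is to take $L^p(\Omega)$-norms in the linear equation \eqref{ecu1} and close a Gronwall estimate for
\[
\psi(r):=\sup_{z\in\RR}\big\|D_{s,y}u(r,z)\big\|_p^2,\qquad s\le r\le T,
\]
where the supremum over the spatial variable is precisely what makes the recursion self-contained. Two reductions come first. The indicator $\mathbf{1}_{\{|x-y|\le t-s\}}$ in \eqref{ecu3} is a consequence of finite speed of propagation for the wave kernel $\tfrac12\mathbf{1}_{\{|x-z|\le t-r\}}$: along the Picard scheme defining $u$, if $D_{s,y}u_n(r,z)=0$ whenever $|z-y|>r-s$, then in the stochastic integral for $u_{n+1}$ the integrand is nonzero only when $|x-z|\le t-r$ \emph{and} $|z-y|\le r-s$, forcing $|x-y|\le t-s$ by the triangle inequality, while the boundary term already carries the factor $\mathbf{1}_{\{|x-y|\le t-s\}}$. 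Hence it suffices to prove a uniform bound $\psi(t)\le C^2$ on $[0,T]$, and the very same Picard scheme guarantees that $\psi_n(t):=\sup_z\|D_{s,y}u_n(t,z)\|_p^2$ is finite at each stage (note that $u(t,x)\in\mathbb D^{1,p}$ only controls the $\HH$-norm of $Du(t,x)$, not the pointwise quantity $\|D_{s,y}u(t,x)\|_p$), which is what legitimizes the Gronwall step in the limit.

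To estimate the two terms in \eqref{ecu1}, I would bound the boundary term using that $\sigma$ is Lipschitz together with the standard uniform moment bound $\sup_{[0,T]\times\RR}\|u(r,z)\|_p<\infty$, giving $\|\sigma(u(s,y))\|_p\le|\sigma(1)|+L\|u(s,y)-1\|_p\le C_1$ uniformly in $(s,y)$. For the stochastic integral I would apply the Burkholder--Davis--Gundy inequality for the Dalang--Walsh integral followed by Minkowski's inequality, which in the case $H\in(1/2,1)$ passes from the $\HH_0$-norm to the $L^p$-norms of the integrand: using $|\Sigma|\le L$,
\begin{align*}
\Big\|\int_s^t\!\!\int_\RR \mathbf{1}_{\{|x-z|\le t-r\}}\Sigma(r,z)D_{s,y}u(r,z)\,W(dr,dz)\Big\|_p^2
&\le C_pL^2\int_s^t\alpha_H\int_{\RR^2}\mathbf{1}_{\{|x-z|\le t-r\}}\mathbf{1}_{\{|x-z'|\le t-r\}}\\
&\qquad\times\big\|D_{s,y}u(r,z)\big\|_p\,\big\|D_{s,y}u(r,z')\big\|_p\,|z-z'|^{2H-2}\,dz\,dz'\,dr.
\end{align*}
Bounding the two $L^p$-norms by $\sqrt{\psi(r)}$ and evaluating the inner spatial integral via Lemma \ref{lem1} with $\xi=x$ and both ``times'' equal to $t-r$ gives $\alpha_H\int_{\RR^2}(\cdots)\,dz\,dz'=2^{2H}(t-r)^{2H}\le 2^{2H}T^{2H}$, so this term is at most $C_2\int_s^t\psi(r)\,dr$. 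The case $H=1/2$ is identical and simpler: BDG reduces the spatial factor to $\int_\RR\mathbf{1}_{\{|x-z|\le t-r\}}\,dz=2(t-r)$.

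Finally, combining the two estimates through $(a+b)^2\le 2a^2+2b^2$ and taking the supremum over $x\in\RR$ yields the closed inequality
\[
\psi(t)\le \tfrac12 C_1^2+\tfrac12 C_2\int_s^t\psi(r)\,dr,
\]
and Gronwall's lemma gives $\psi(t)\le \tfrac12 C_1^2\,e^{C_2 T/2}=:C^2$ uniformly on $[0,T]$. Together with the support property established above, this is exactly \eqref{ecu3}. The main obstacle I anticipate is the correct passage, in the colored case, from the BDG bound through the $\HH_0$-norm to a quantity involving only $\|D_{s,y}u(r,z)\|_p$ — where Minkowski's inequality and the explicit evaluation in Lemma \ref{lem1} are essential — together with the bookkeeping needed to justify the a priori finiteness of $\psi$ via the Picard approximations before Gronwall can be invoked.
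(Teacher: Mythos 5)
Your overall strategy---deriving an integral inequality for $\|D_{s,y}u(r,z)\|_p^2$ from \eqref{ecu1} via Burkholder and Minkowski, evaluating the spatial integral with Lemma \ref{lem1} (your computation $\alpha_H\int(\cdots)\,dz\,dz'=2^{2H}(t-r)^{2H}$ is correct), extracting the support indicator from finite speed of propagation, and closing with a Gronwall-type argument---is essentially the paper's. The genuine gap is the step you flag and then dispose of in one clause: the passage from finiteness of $\psi_n(t)=\sup_z\|D_{s,y}u_n(t,z)\|_p^2$ for the Picard iterates to finiteness (indeed, to the very definition) of $\psi(t)=\sup_z\|D_{s,y}u(t,z)\|_p^2$ for the solution itself. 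The convergence $u_n\to u$ in $L^p(\Omega)$ only yields convergence of $Du_n(t,x)$ to $Du(t,x)$ in the \emph{weak} topology of $L^p(\Omega;\HH)$, and pointwise evaluation $(s,y)\mapsto D_{s,y}u(t,x)$ is not continuous for that topology; worse, for $H\in(1/2,1)$ the space $\HH_0$ contains distributions, so it is not even clear a priori that $Du(t,x)$ is a measurable function of $(s,y)$, i.e.\ that the quantity $\|D_{s,y}u(t,x)\|_p$ appearing in your $\psi$ makes sense. A uniform bound on the $\psi_n$ therefore does not by itself ``legitimize the Gronwall step in the limit.''

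The paper devotes half of its appendix to exactly this point. It first uses the uniform bound \eqref{DnBDD} on the iterates to show that $Du_n(t,x)$ is bounded in $L^p(\Omega;L^2(\R_+\times\R))$ uniformly in $n$, so the weak limit $Du(t,x)$ is a genuine function of $(s,y)$ with $\sup_{t,x}\E\big[\|Du(t,x)\|^p_{L^2(\R_+\times\R)}\big]<\infty$. It then mollifies in $(s,y)$ with an approximation of the identity $M_\varepsilon$: the mollified derivative satisfies a convolved version of \eqref{ecu1}, the quantity $Q_\varepsilon(t)$ of \eqref{Q(t)} is finite by Cauchy--Schwarz against the $L^2$-bound, Gronwall applies to $Q_\varepsilon$ with a constant independent of $\varepsilon$, and Fatou along an a.e.-convergent subsequence $M_{\varepsilon_n}$ recovers the pointwise bound---this is also where the ``for almost every $(s,y)$'' in the statement comes from. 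Your proof needs this (or an equivalent) regularization-and-limit argument inserted before Gronwall can be invoked; the remaining estimates in your proposal are correct and match the paper's.
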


\subsection{Stein's method}
Stein's method is a probabilistic technique that allows one to measure the distance between a probability distribution and a target  distribution, notably the normal distribution. Recall that the total variation distance between two real  random variables $F$ and $G$  is defined by 
\begin{equation}
d_{\rm TV}(F,G) := \sup_{B \in \mathcal{B}(\RR)} \big\vert  P(F \in B) - P(G \in B) \big\vert\,, \label{TVDIST}
\end{equation}
where $\mathcal{B}(\RR)$ is the collection of all Borel sets in $\RR$.

\medskip

 The following theorem provides the well-known Stein's bound in  the total variation distance; see \cite[Chapter 3]{NP}.
 
\begin{theorem}\label{thm:Stein}
For $Z \sim \mathcal{N}(0,1)$ and for any  integrable random variable $F$, 
\begin{equation}
d_{\rm TV}(F, Z) \leq \sup_{f \in \mathscr{F}_{\rm TV} } \big\vert \E[ f'(F)] - \E [F f(F)]\big\vert \,,
\end{equation}
where   $\mathscr{F}_{\rm TV}$ is the class of continuously differentiable functions $f:\RR\to\RR$ such that
 $\|f\|_{\infty} \leq \sqrt{\pi/2} $ and $ \|f'\|_{\infty} \leq 2 $.
\end{theorem}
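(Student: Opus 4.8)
The plan is to run the standard Stein's method argument: for every target event I would solve the associated Stein ODE and feed the solution into the right-hand side of the asserted bound. Fix a Borel set $B \in \mathcal{B}(\RR)$ and write $p = P(Z \in B)$. Consider the linear ODE
\[
f'(x) - x f(x) = \mathbf{1}_B(x) - p,
\]
whose bounded solution is obtained by multiplying through by the integrating factor $e^{-x^2/2}$, giving
\[
f_B(x) = e^{x^2/2} \int_{-\infty}^x \big( \mathbf{1}_B(y) - p \big) e^{-y^2/2} \, dy = - e^{x^2/2} \int_x^{+\infty} \big( \mathbf{1}_B(y) - p \big) e^{-y^2/2} \, dy .
\]
The two representations coincide because $\int_\RR (\mathbf{1}_B(y) - p) e^{-y^2/2} \, dy = \sqrt{2\pi}\,(P(Z\in B) - p) = 0$, and the Gaussian weight guarantees that $f_B$ is bounded.

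The technical heart of the proof, and the step I expect to be the main obstacle, is to establish the two Stein factor bounds
\[
\| f_B \|_\infty \le \sqrt{\pi/2}, \qquad \| f_B' \|_\infty \le 2 ,
\]
uniformly over all Borel sets $B$. These follow from a direct but delicate analysis of the explicit formula above: splitting according to the sign of $x$ and using the Mills' ratio estimate $e^{x^2/2} \int_x^{+\infty} e^{-y^2/2} \, dy \le \sqrt{\pi/2}$ for $x \ge 0$ controls $\| f_B \|_\infty$, after which the relation $f_B'(x) = x f_B(x) + \mathbf{1}_B(x) - p$, together with a companion tail estimate on $x f_B(x)$, yields $\| f_B' \|_\infty \le 2$; these are precisely the total variation Stein factors recorded in \cite[Chapter 3]{NP}. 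One complication is that $f_B$ is only Lipschitz and not $C^1$ (its derivative jumps at the boundary of $B$), so $f_B$ itself does not belong to $\mathscr{F}_{\rm TV}$. To stay within the stated class I would first treat an \emph{open} set $B$, approximating $\mathbf{1}_B$ from below by an increasing sequence of continuous functions $h_n$ with $0 \le h_n \uparrow \mathbf{1}_B$ (e.g.\ $h_n = \min(1, n\, d(\cdot, B^c))$) and solving $f_n' - x f_n = h_n - \E[h_n(Z)]$. Since $h_n$ is continuous and valued in $[0,1]$, each $f_n$ is continuously differentiable and satisfies the same two bounds, so $f_n \in \mathscr{F}_{\rm TV}$.

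Finally I would combine these ingredients. Evaluating the Stein equation at $x = F$ and taking expectations (legitimate because $f_n$ is bounded and $F$ is integrable) gives the identity
\[
\E[h_n(F)] - \E[h_n(Z)] = \E\big[ f_n'(F) - F f_n(F) \big] \le \sup_{f \in \mathscr{F}_{\rm TV}} \big| \E[ f'(F)] - \E[F f(F)] \big| .
\]
Letting $n \to \infty$, monotone convergence yields $\E[h_n(F)] \to P(F \in B)$ and $\E[h_n(Z)] \to P(Z \in B)$ for an arbitrary law of $F$ (this is exactly why the monotone, open-set approximation is used, as it sidesteps any issue about where the law of $F$ places mass), so that $|P(F\in B) - P(Z\in B)|$ is dominated by the right-hand side for every open $B$. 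Since $P_F$ and $P_Z$ are finite Borel measures, outer regularity of the finite measure $P_F + P_Z$ shows that the supremum of $|P(F \in B) - P(Z \in B)|$ over open sets already equals the supremum over all Borel sets, that is, it equals $d_{\rm TV}(F, Z)$; taking this supremum completes the proof.
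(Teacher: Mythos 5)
Your proposal is correct, but note what the paper actually does here: it gives no proof at all, deferring entirely to the citation \cite[Theorem 3.3.1]{NP} (the constants $\sqrt{\pi/2}$ and $2$ in the definition of $\mathscr{F}_{\rm TV}$ are precisely the classical Stein factors from that reference). Your argument reconstructs the standard proof behind that citation---the Stein equation for an indicator, the factor bounds $\|f_B\|_\infty \le \sqrt{\pi/2}$ and $\|f_B'\|_\infty \le 2$ via the Mills ratio, and a smoothing step to land inside the $C^1$ class $\mathscr{F}_{\rm TV}$---and the steps check out: for open $B$, $h_n = \min(1, n\,d(\cdot, B^c))$ is continuous and increases pointwise to $\mathbf{1}_B$ everywhere; the corresponding $f_n$ solves a Stein equation with continuous right-hand side, hence is genuinely $C^1$ via $f_n'(x) = x f_n(x) + h_n(x) - \E[h_n(Z)]$, and satisfies the same two bounds because $\|h_n - \E[h_n(Z)]\|_\infty \le 1$; the expectation identity needs only the boundedness of $f_n$ and $f_n'$ together with $F \in L^1$ (which is exactly why the theorem assumes integrability); and the reduction from Borel to open sets by outer regularity of the finite measure $P_F + P_Z$ is sound. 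In fact your measure-theoretic bookkeeping is arguably cleaner than the usual textbook route, which smooths $\mathbf{1}_B$ by convolution and must then worry about convergence of $\E[h_n(F)]$ when the law of $F$ has atoms; your monotone open-set scheme sidesteps this entirely, as you observe. The one soft spot is that the Stein factor bounds themselves are only sketched (sign-splitting plus the estimate $e^{x^2/2}\int_x^{\infty} e^{-y^2/2}\,dy \le \sqrt{\pi/2}$ for $x \ge 0$) and partially deferred to \cite[Chapter 3]{NP}; since these bounds are standard, your outline of their derivation is the correct one, and the paper itself delegates the whole theorem to the same reference, this level of detail is acceptable.
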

For a proof of this theorem, see \cite[Theorem 3.3.1]{NP}. 
Theorem  \ref{thm:Stein} can be combined with Malliavin calculus to get a very useful estimate (see \cite{HNV18, Eulalia,Zhou}).

\begin{proposition}\label{lem: dist}
Let $F=\delta (v)$ for some $\HH$-valued random variable $v\in{\rm Dom }\delta$. Assume  $F\in \mathbb{D}^{1,2}$ and $\E [F^2] = 1$ and  let $Z \sim \mathcal{N}(0,1)$.  Then we have 
\begin{equation}
d_{\rm TV}(F, Z) \leq 2 \sqrt{{\rm Var}\big[  \langle DF, v\rangle_{\HH} \big] }\,.
\end{equation}
\end{proposition}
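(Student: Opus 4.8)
The plan is to combine the Stein bound from Theorem \ref{thm:Stein} with the Malliavin integration-by-parts machinery, following the Nourdin--Peccati approach. First I would recall that since $F = \delta(v)$ and $F \in \mathbb{D}^{1,2}$ with $\E[F^2]=1$, the duality formula relating $\delta$ and $D$ gives a clean expression for the quantity $\E[Ff(F)]$ appearing in the Stein bound. Specifically, applying the duality $\E(\delta(v)G) = \E(\langle v, DG\rangle_\HH)$ with $G = f(F)$ and using the chain rule $Df(F) = f'(F)\,DF$ (valid for $f \in \mathscr{F}_{\rm TV}$, which is continuously differentiable with bounded derivative), I obtain
\[
\E\big[ F f(F)\big] = \E\big[ \delta(v) f(F)\big] = \E\big[ \langle v, Df(F)\rangle_\HH\big] = \E\big[ f'(F)\,\langle DF, v\rangle_\HH\big].
\]

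Next I would substitute this identity into the right-hand side of Theorem \ref{thm:Stein}. For any $f \in \mathscr{F}_{\rm TV}$, this yields
\[
\big\vert \E[f'(F)] - \E[Ff(F)]\big\vert = \big\vert \E\big[ f'(F)\big(1 - \langle DF, v\rangle_\HH\big)\big]\big\vert \leq \|f'\|_\infty\,\E\big\vert 1 - \langle DF, v\rangle_\HH\big\vert \leq 2\,\E\big\vert 1 - \langle DF, v\rangle_\HH\big\vert,
\]
using $\|f'\|_\infty \leq 2$. Taking the supremum over $\mathscr{F}_{\rm TV}$ and invoking Theorem \ref{thm:Stein} gives $d_{\rm TV}(F,Z) \leq 2\,\E\vert 1 - \langle DF, v\rangle_\HH\vert$. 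It then remains to replace the $L^1$-norm by a variance. For this I would observe that $\E[\langle DF, v\rangle_\HH]$ is itself close to $1$: again by duality, $\E[\langle DF, v\rangle_\HH] = \E[F\,\delta(v)] = \E[F^2] = 1$ (taking $f(x)=x$ in the identity above, or directly applying duality to $F = \delta(v)$). Hence $\langle DF, v\rangle_\HH$ has mean exactly $1$, and the Cauchy--Schwarz inequality gives
\[
\E\big\vert 1 - \langle DF, v\rangle_\HH\big\vert = \E\big\vert \E[\langle DF, v\rangle_\HH] - \langle DF, v\rangle_\HH\big\vert \leq \sqrt{ {\rm Var}\big[ \langle DF, v\rangle_\HH\big]}.
\]
Combining the two displays yields the claimed bound $d_{\rm TV}(F,Z) \leq 2\sqrt{{\rm Var}[\langle DF, v\rangle_\HH]}$.

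The main obstacle, and the only place requiring genuine care, is the justification of the chain rule step $Df(F) = f'(F)\,DF$ together with the validity of the duality formula for the specific pair $(v, f(F))$: one must check that $f(F) \in \mathbb{D}^{1,2}$ so that it lies in the domain of the duality relation. Since $f$ is continuously differentiable with $\|f\|_\infty$ and $\|f'\|_\infty$ both bounded on $\mathscr{F}_{\rm TV}$, and $F \in \mathbb{D}^{1,2}$, a standard approximation argument (regularizing $f$ and passing to the limit, as in the closability of $D$) shows $f(F) \in \mathbb{D}^{1,2}$ with $Df(F) = f'(F)\,DF$; the boundedness of $f$ and $f'$ makes the requisite $L^2$ bounds immediate. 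Everything else is a direct chain of duality, the triangle inequality, and Cauchy--Schwarz, so I expect no further difficulty.
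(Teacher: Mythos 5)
Your proof is correct and is precisely the standard Nourdin--Peccati argument: the paper does not prove Proposition \ref{lem: dist} itself but defers to the cited references, where exactly this chain of Stein's bound, the duality $\E[Ff(F)]=\E[f'(F)\langle DF,v\rangle_\HH]$, the observation $\E[\langle DF,v\rangle_\HH]=\E[F^2]=1$, and Cauchy--Schwarz is used. Your remarks on the chain rule and on $f(F)\in\mathbb{D}^{1,2}$ cover the only technical points that need checking.
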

 In the course of proving Theorem \ref{thm:functional-CLT},  we also need the following lemma, which is a generalization of    \cite[Theorem 6.1.2]{NP}; see \cite[Proposition 2.3]{HNV18}.

\begin{lemma}\label{lemma: NP 6.1.2}
Let $F=( F^{(1)}, \dots, F^{(m)})$ be a random vector such that $F^{(i)} = \delta (v^{(i)})$ for $v^{(i)} \in {\rm Dom}\, \delta$ and
$  F^{(i)} \in \mathbb{D}^{1,2}$, $i = 1,\dots, m$. Let $Z$ be an $m$-dimensional centered Gaussian  vector with covariance $(C_{i,j})_{ 1\leq i,j\leq m} $. For any  $C^2$ function $h: \R^m \rightarrow \R$ with bounded second partial derivatives, we have
\[
\big| \E [ h(F)] -\E [ h(Z)]  \big| \le \frac{m}{2}  \|h ''\|_\infty \sqrt{   \sum_{i,j=1}^m   \E \Big[ \big(C_{i,j} - \langle DF^{(i)}, v^{(j)} \rangle_{\HH}\big)^2 \Big] } \,,
\]
where $\|h ''\|_\infty : = \sup\big\{   \big\vert \frac{\partial^2}{\partial x_i\partial x_j} h(x) \big\vert\,:\, x\in\RR^m\, , \, i,j=1, \ldots, m \big\}$.
\end{lemma}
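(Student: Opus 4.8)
The plan is to use the Gaussian interpolation (``smart path'') method, combining the two integration-by-parts formulas at our disposal: the Malliavin duality $\E[\delta(v)G]=\E[\langle v,DG\rangle_\HH]$ on the side of $F$, and classical Gaussian integration by parts on the side of $Z$. First I would enlarge the probability space so that $Z$ is a centered Gaussian vector with covariance $(C_{i,j})$, independent of the isonormal process $W$ (hence $DZ^{(i)}=0$), and introduce the interpolation
\[
\Psi(t) := \E\big[ h\big(\sqrt{t}\,F + \sqrt{1-t}\,Z\big)\big], \qquad t\in[0,1],
\]
so that $\Psi(1)=\E[h(F)]$ and $\Psi(0)=\E[h(Z)]$, whence $\E[h(F)]-\E[h(Z)]=\int_0^1\Psi'(t)\,dt$.

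Next I would differentiate under the expectation to obtain
\[
\Psi'(t) = \sum_{i=1}^m \E\Big[ \partial_i h\big(\sqrt t\,F + \sqrt{1-t}\,Z\big)\big(\tfrac{1}{2\sqrt t}F^{(i)} - \tfrac{1}{2\sqrt{1-t}}Z^{(i)}\big)\Big].
\]
For the term carrying $F^{(i)}=\delta(v^{(i)})$ I would invoke the duality formula together with the chain rule $D\big(\partial_i h(\sqrt t\,F + \sqrt{1-t}\,Z)\big) = \sqrt t \sum_j \partial^2_{ij}h(\cdots)\,DF^{(j)}$ (valid since $\partial_i h$ is Lipschitz and $F^{(j)}\in\mathbb{D}^{1,2}$), which turns that term into $\tfrac12\sum_{j} \E\big[\partial^2_{ij}h(\cdots)\,\langle DF^{(j)}, v^{(i)}\rangle_\HH\big]$. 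For the term carrying $Z^{(i)}$ I would use Gaussian integration by parts for the centered vector $Z$, giving $-\tfrac12\sum_{j} C_{i,j}\E[\partial^2_{ij}h(\cdots)]$. The crucial feature is that the factors $1/\sqrt t$ and $1/\sqrt{1-t}$ are absorbed after each integration by parts, so no endpoint singularity survives; after relabelling indices (using the symmetry of $\partial^2_{ij}h$ and of $C_{i,j}$) one arrives at
\[
\Psi'(t) = \tfrac12 \sum_{i,j=1}^m \E\Big[\partial^2_{ij}h\big(\sqrt t\,F + \sqrt{1-t}\,Z\big)\big(\langle DF^{(i)}, v^{(j)}\rangle_\HH - C_{i,j}\big)\Big].
\]

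Finally I would bound $|\partial^2_{ij}h|\le\|h''\|_\infty$ uniformly, apply $\E|X|\le\sqrt{\E[X^2]}$ to each of the $m^2$ summands, and then use the Cauchy--Schwarz inequality $\sum_{i,j}a_{i,j}\le m\big(\sum_{i,j}a_{i,j}^2\big)^{1/2}$ over the $m^2$ indices to obtain a bound on $|\Psi'(t)|$ that is uniform in $t$; integrating over $[0,1]$ then yields exactly the stated estimate with the factor $m/2$. The step I expect to require the most care is the rigorous justification of the differentiation under the expectation and of the two integration-by-parts formulas: one must verify the requisite integrability (finite, since $\partial_i h$ grows at most linearly and $F^{(i)},Z^{(i)}\in L^2$) and that $\partial_i h(\sqrt t\,F + \sqrt{1-t}\,Z)\in\mathbb{D}^{1,2}$, which follows from the Lipschitz chain rule in Malliavin calculus. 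To sidestep these technicalities cleanly, I would first prove the estimate for $h$ with bounded derivatives of all orders, and then pass to a general $C^2$ function with bounded second derivatives by a standard mollification and truncation argument, noting that both sides depend on $h$ only through $\|h''\|_\infty$.
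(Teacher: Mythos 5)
Your proposal is correct, and it coincides with the intended proof: the paper itself does not prove this lemma but defers to \cite[Proposition 2.3]{HNV18}, whose argument is precisely this Gaussian smart-path interpolation, using the Malliavin duality $\E[\delta(v)G]=\E[\langle v,DG\rangle_\HH]$ on the $F$-side and Gaussian integration by parts on the independent copy $Z$, with the $1/\sqrt{t}$ and $1/\sqrt{1-t}$ factors cancelling exactly as you describe. The constant $m/2$ then follows from $\sum_{i,j}a_{i,j}\le m\big(\sum_{i,j}a_{i,j}^2\big)^{1/2}$ and integration over $t\in[0,1]$, so no changes are needed.
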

 
\section{Proof of Theorem \ref{thm:TV-distance}}
\label{sec:thm1}

We begin with the asymptotic variance of $F_R(t)$, as $R$ tends to infinity.  
 We need some preliminary results and notation. We fix $t > 0$ and  define
 \[
 \varphi_R(s,y) =\frac 12 \int_{-R} ^R  \mathbf{1}_{\{|x-y| \le t-s\}} dx.
 \]
 Notice that  $2\varphi_R(s,y)$ is the length of $[-R,R] \cap [ y-t+s, y+ t-s] $, so 
 \[
  \varphi_R(s,y) = \frac{1}{2} \Big( \big[ R\wedge (y+ t-s)\big] - \big[ (-R)   \vee (y-t+s)\big] \Big)_+.
\]
As a consequence, we deduce that  
\begin{center}
 $ \varphi_R(s,y) =0$,  if $|y| \geq R+t-s$;  and  $ \varphi_R(s,y) \le R \wedge (t-s)$.
  \end{center}
Set 
${\displaystyle
G_R =G_R(t): = \int_{-R}^Ru(t,x)dx - 2R.
}$
With this notation, we can write
\[
G_R=  \int_0^t \int_{\R} \varphi_R(s,y) \sigma (u(s,y)) W(ds,dy).
\]

The next lemma provides a useful formula.
\begin{lemma}\label{rem1}   Let $0 < a \leq b$ and define $\varphi_{a,R}(y ) = \frac{1}{2} \int_{-R}^R \mathbf{1}_{\{ \vert x - y \vert \leq a \}} \, dx$, then we have,  for any $R\ge 2b$,
\[
\int_{\R}  \frac{1}{R} \varphi_{a,R}(y) \varphi_{b,R}(y)  dy=   2ab - R^{-1} \Big( \frac{1}{2}ab^2 + \frac{1}{6}a^3 \Big)  .
\]
Therefore, $\lim_{R\to +\infty} \int_{\R}  \frac{1}{R} \varphi_{a,R}(y) \varphi_{b,R}(y)  dy =2ab$.
\end{lemma}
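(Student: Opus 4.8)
The plan is to exploit the explicit trapezoidal form of $\varphi_{a,R}$ and $\varphi_{b,R}$ and reduce the identity to an elementary one-variable integration. First I would record that, since $2\varphi_{a,R}(y)$ is the length of $[-R,R]\cap[y-a,y+a]$,
\[
\varphi_{a,R}(y)=
\begin{cases}
a, & |y|\le R-a,\\
\tfrac12\big(R+a-|y|\big), & R-a< |y|< R+a,\\
0, & |y|\ge R+a,
\end{cases}
\]
and likewise for $\varphi_{b,R}$ with $a$ replaced by $b$. Both functions depend only on $|y|$ and are therefore even, so that $\int_{\R}\varphi_{a,R}(y)\varphi_{b,R}(y)\,dy=2\int_0^{\infty}\varphi_{a,R}(y)\varphi_{b,R}(y)\,dy$.

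Next, using $0<a\le b$ and $R\ge 2b$ (which give $0\le R-b\le R-a\le R+a$, so the three regions below are well defined and non-overlapping), I would split $[0,\infty)$ as $[0,R-b]\cup[R-b,R-a]\cup[R-a,R+a]$, the product vanishing for $y\ge R+a$ because $\varphi_{a,R}$ does. On the first region both factors are constant, equal to $a$ and $b$; on the second $\varphi_{a,R}=a$ while $\varphi_{b,R}=\tfrac12(R+b-y)$; on the third $\varphi_{a,R}=\tfrac12(R+a-y)$ and $\varphi_{b,R}=\tfrac12(R+b-y)$. Thus on each region the integrand is a polynomial in $y$ of degree at most two.

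I would then integrate each piece---using for instance the substitutions $u=R+b-y$ on the second region and $v=R+a-y$ on the third---to obtain the three contributions $ab(R-b)$, $\tfrac{a}{4}(3b^2-2ab-a^2)$, and $\tfrac12 a^2b+\tfrac16 a^3$. Adding them, the terms in $a^2b$ cancel and the sum collapses to $abR-\tfrac14 ab^2-\tfrac1{12}a^3$; doubling by symmetry and dividing by $R$ produces exactly $2ab-R^{-1}\big(\tfrac12 ab^2+\tfrac16 a^3\big)$, and letting $R\to\infty$ gives the asserted limit.

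The whole argument is elementary; the only point demanding care is the correct bookkeeping of the three regions, namely checking that the hypotheses $a\le b$ and $R\ge 2b$ force $0\le R-b\le R-a\le R+a$ so that the flat and sloped pieces of the two trapezoids line up as described and no extra case distinction arises. Everything after this reduction is routine integration of quadratics.
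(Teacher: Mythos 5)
Your proof is correct, and the three regional contributions you list ($ab(R-b)$, $\tfrac{a}{4}(3b^2-2ab-a^2)$, and $\tfrac12 a^2b+\tfrac16 a^3$) do sum, after doubling and dividing by $R$, to the stated identity $2ab-R^{-1}\big(\tfrac12 ab^2+\tfrac16 a^3\big)$. Your route differs from the paper's in the order of integration: the paper writes the quantity as a triple integral $\frac{1}{4R}\int_{\R}\int_{[-R,R]^2}\mathbf{1}_{\{|\tilde x-y|\le a\}}\mathbf{1}_{\{|x-y|\le b\}}\,d\tilde x\,dx\,dy$, integrates out $y$ first (the overlap length of $[\tilde x-a,\tilde x+a]$ and $[x-b,x+b]$ equals $2a$ when $|\tilde x-x|\le b-a$ and $a+b-|x-\tilde x|$ when $b-a<|\tilde x-x|\le a+b$), and is left with a double integral over $[-R,R]^2$ of a function of $|x-\tilde x|$, whose evaluation it leaves to the reader. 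You instead make the trapezoidal profiles of $\varphi_{a,R}$ and $\varphi_{b,R}$ explicit and reduce everything to a single one-dimensional piecewise-quadratic integral. The two computations are equally elementary; the paper's version generalizes more directly to the correlated setting (where a kernel $|x-\tilde x|^{2H-2}$ is inserted between the two indicators, as in Remark \ref{rem2} and Proposition \ref{pro:covariance2}), while yours has the merit of being fully explicit and self-contained, actually carrying out the final verification that the paper omits. One minor observation: your argument only uses $R\ge b$ (so that $R-b\ge 0$ and the flat plateaus of both trapezoids are present), so the hypothesis $R\ge 2b$ is more than you need; this is harmless.
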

\begin{proof}
We can write
 \begin{align*}
&\quad \int_{\R}  \frac{1}{R} \varphi_{a,R}(y) \varphi_{b,R}(y)  \, dy  = \frac{1}{4R} \int_\R \int_{[-R, R]^2}    \mathbf{1}_{\{|\tilde{x}-y| \le a\}}    \mathbf{1}_{\{|x-y| \le b\}} \,  d\tilde{x} dx dy \\
 &= \frac{1}{4R}  \int_{[-R, R]^2}d\tilde{x} dx  \Big(  \mathbf{1}_{\{|\tilde{x}-x| \le b-a\}}  + \mathbf{1}_{\{ b-a < |\tilde{x}-x| \leq b+a\}}    \Big)   \int_{\RR }   \mathbf{1}_{\{|\tilde{x}-y| \le a ,|x-y| \le b\}} \,   dy   \\
 &=    \frac{1}{4R}  \int_{[-R, R]^2} \Big\{  \mathbf{1}_{\{|\tilde{x}-x| \le b-a\}}  (2a) + \mathbf{1}_{\{b-a <|\tilde{x}-x| \leq b+a\}} \big( a+b - \vert x- \tilde{x}  \vert \big) \Big\}      dx d\tilde{x}  \,,
 \end{align*}
which is equal to
$
2ab - R^{-1} \Big( \frac{1}{2}ab^2 + \frac{1}{6}a^3 \Big)$ for any $R\geq 2b$, as one can   verify. 
\end{proof}

The next result provides the asymptotic variance of $G_R(t)$ for $H=1/2$.
\begin{proposition}
\label{pro:covariance1}
Suppose $H= 1/2$. Denote $\xi(s)=   \E \big[\sigma^2(u(s,x))   \big] $, which does not depend on $x$ as a consequence of stationarity.
Then
$$
\lim_{R\to\infty}  \frac{1}{R}  \E\big[ G_R ^2\big] =  2 \int_0^t (t-s)^2\xi(s) ds.
$$
and $\E\big[ G_R^2\big] \geq {\displaystyle \left( \frac{5}{3} \int_0^t (t-s)^2\xi(s) ds\right)R}$ for any $R\geq 2t$.  
\end{proposition}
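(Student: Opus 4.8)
The plan is to exploit the fact that for $H=1/2$ the noise $W$ is space-time white, so the Walsh--It\^o isometry recorded after \eqref{DW} applies directly to the representation
\[
G_R = \int_0^t \int_{\R} \varphi_R(s,y)\, \sigma(u(s,y))\, W(ds,dy).
\]
Since $\varphi_R(s,y)$ is deterministic, the isometry together with the stationarity identity $\E[\sigma^2(u(s,y))] = \xi(s)$ gives
\[
\E\big[G_R^2\big] = \int_0^t \int_{\R} \varphi_R(s,y)^2\, \xi(s)\, dy\,ds = \int_0^t \xi(s)\Big(\int_{\R} \varphi_R(s,y)^2\, dy\Big) ds.
\]
First I would note that all these integrals are finite: the Lipschitz property of $\sigma$ together with the uniform $L^2$-bound on $u$ forces $\xi$ to be bounded on $[0,t]$, while $\varphi_R(s,\cdot)$ is bounded with compact support.

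The key computational step is to evaluate the inner integral exactly. Observing that $\varphi_R(s,y) = \varphi_{a,R}(y)$ with $a = t-s$, I would apply Lemma \ref{rem1} in the diagonal case $a=b=t-s$. Since $s\ge 0$ forces $2(t-s)\le 2t \le R$, the hypothesis $R\ge 2b$ of that lemma holds for every $s\in[0,t)$, and it yields
\[
\int_{\R} \varphi_R(s,y)^2\, dy = 2R(t-s)^2 - \tfrac{2}{3}(t-s)^3,
\]
using $\tfrac12 + \tfrac16 = \tfrac23$. Plugging this in produces the exact identity
\[
\E\big[G_R^2\big] = 2R\int_0^t (t-s)^2 \xi(s)\, ds - \tfrac{2}{3}\int_0^t (t-s)^3 \xi(s)\, ds,
\]
valid for all $R\ge 2t$. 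Dividing by $R$ and letting $R\to\infty$ kills the second term and delivers the claimed limit $2\int_0^t (t-s)^2\xi(s)\,ds$.

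For the lower bound it remains to check that the correction term does not consume more than a $\tfrac13$-fraction of the leading term once $R\ge 2t$. Concretely, the desired inequality $\E[G_R^2] \ge \tfrac53 R\int_0^t(t-s)^2\xi(s)\,ds$ rearranges to $R\int_0^t(t-s)^2\xi(s)\,ds \ge 2\int_0^t(t-s)^3\xi(s)\,ds$, and since $(t-s)^3 \le t\,(t-s)^2$ on $[0,t]$ one gets $2\int_0^t(t-s)^3\xi(s)\,ds \le 2t\int_0^t(t-s)^2\xi(s)\,ds \le R\int_0^t(t-s)^2\xi(s)\,ds$ from $R\ge 2t$, which closes the argument. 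I expect no serious obstacle here: the whole proposition reduces to the exact quadratic computation supplied by Lemma \ref{rem1}, and the only points deserving a word of care are the verification that $R\ge 2t$ legitimizes the diagonal use of that lemma and the boundedness of $\xi$ ensuring all the integrals converge.
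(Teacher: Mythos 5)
Your proposal is correct and follows essentially the same route as the paper: the Walsh--It\^o isometry reduces $\E[G_R^2]$ to $\int_0^t \xi(s)\int_\R \varphi_R^2(s,y)\,dy\,ds$, and the diagonal case $a=b=t-s$ of Lemma \ref{rem1} gives the exact value $2R(t-s)^2\bigl(1-\tfrac{t-s}{3R}\bigr)$, from which both the limit and the lower bound follow. The only (immaterial) difference is that the paper deduces the lower bound from the pointwise inequality $1-\tfrac{t-s}{3R}\ge \tfrac{5}{6}$ for $R\ge 2t$, whereas you compare the two integrals after integrating in $s$.
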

\begin{proof}
Thanks to the It\^o isometry,   we have 
\[
\E  [ G_R^2] =   \int_0^t \int_{\R} \varphi ^2 _R(s,y)     \E\big[ \sigma^2 (u(s,y))\big] dyds =
   \int_0^t  \xi(s) \int_{\R} \varphi ^2 _R(s,y)    \, dy\,ds.
\]
If  $R \geq 2t$,   we can see from  Lemma  \ref{rem1}     that
\begin{align}\label{use1}
 \frac{1}{R} \int_{\R} \varphi ^2 _R(s,y)     dy =  2(t-s)^2 \Big( 1- \frac{t-s}{3R} \Big) \in \Big[   \,\, \frac{5}{3}(t-s)^2 ,   2(t-s)^2  \Big] \,.
\end{align}
 This leads easily  to the results. \qedhere
 \end{proof}

Surprisingly, in the case $H>1/2$, we obtain a different formula for the asymptotic variance of $G_R$.

\begin{proposition}  \label{pro:covariance2}
Suppose $H\in (1/2,1)$.
Denote $\eta(s)=   \E [\sigma(u(s,x))] $, which does not depend on $x$  as a consequence of stationarity.
Then
$$
\lim_{R\to\infty} R^{-2H} \E[ G_R ^2] =  2^{2H} \int_0^t (t-s)^2 \eta^2(s)\, ds.
$$
\end{proposition}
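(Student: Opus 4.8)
The plan is to start from the It\^o--Walsh isometry. Writing $G_R=\delta\big(\varphi_R(\cdot,\ast)\sigma(u(\cdot,\ast))\big)$ and using that the integrand is adapted, so that the Skorohod integral reduces to the Dalang--Walsh integral and the isometry stated after \eqref{DW} applies, together with the explicit form of $\langle\cdot,\cdot\rangle_{\HH_0}$ from \eqref{corfctH} for $H\in(1/2,1)$, I would obtain
\[
\E[G_R^2]=\alpha_H\int_0^t\int_{\R^2}\varphi_R(s,y)\varphi_R(s,z)\,\rho_s(y-z)\,|y-z|^{2H-2}\,dy\,dz\,ds,
\]
where $\rho_s(y-z):=\E\big[\sigma(u(s,y))\sigma(u(s,z))\big]$ depends only on $y-z$ by stationarity and is bounded by $\xi(s)$ via Cauchy--Schwarz; note that $\sup_{s\le t}\xi(s)<\infty$ because $\sigma$ is Lipschitz and $u$ has uniformly bounded second moments on $[0,t]$.

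The main idea is then a single rescaling $y=R\tilde y$, $z=R\tilde z$. Since $dy\,dz=R^2\,d\tilde y\,d\tilde z$ and $|y-z|^{2H-2}=R^{2H-2}|\tilde y-\tilde z|^{2H-2}$, all powers of $R$ cancel against the prefactor $R^{-2H}$, leaving
\[
R^{-2H}\E[G_R^2]=\alpha_H\int_0^t\int_{\R^2}\varphi_R(s,R\tilde y)\,\varphi_R(s,R\tilde z)\,\rho_s\big(R(\tilde y-\tilde z)\big)\,|\tilde y-\tilde z|^{2H-2}\,d\tilde y\,d\tilde z\,ds.
\]
For fixed $s$ I would identify the pointwise limits of the integrand: from the explicit formula for $\varphi_R$ one has $\varphi_R(s,R\tilde y)\to (t-s)\,\mathbf{1}_{(-1,1)}(\tilde y)$, while $\rho_s\big(R(\tilde y-\tilde z)\big)\to\eta^2(s)$ for $\tilde y\neq\tilde z$ because $R|\tilde y-\tilde z|\to\infty$ (this is the decorrelation discussed below). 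The integrand is supported in $\{|\tilde y|,|\tilde z|\le 2\}$ for $R\ge t$ and dominated by $C\,(t-s)^2\,|\tilde y-\tilde z|^{2H-2}\mathbf{1}_{\{|\tilde y|,|\tilde z|\le 2\}}$, which is integrable since $2H-2>-1$; hence dominated convergence applies to the $(\tilde y,\tilde z)$-integral. The resulting inner limit is $\alpha_H(t-s)^2\eta^2(s)\int_{(-1,1)^2}|\tilde y-\tilde z|^{2H-2}\,d\tilde y\,d\tilde z$, and the elementary computation $\alpha_H\int_{(-1,1)^2}|\tilde y-\tilde z|^{2H-2}\,d\tilde y\,d\tilde z=2^{2H}$ yields $2^{2H}(t-s)^2\eta^2(s)$. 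A second application of dominated convergence in $s$, justified by the uniform (in $R\ge R_0$) bound $R^{-2H}\alpha_H\int_{\R^2}\varphi_R\varphi_R|\rho_s|\,|y-z|^{2H-2}\le C(t-s)^2$, which comes from $\varphi_R(s,y)\le(t-s)\mathbf{1}_{\{|y|\le R+t\}}$, $|\rho_s|\le\xi(s)$ and the same scaling of $\int_{\{|y|,|z|\le R+t\}}|y-z|^{2H-2}$, then gives the claimed value $2^{2H}\int_0^t(t-s)^2\eta^2(s)\,ds$.

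The step I expect to be the main obstacle is the pointwise decorrelation $\rho_s(w)\to\eta^2(s)$ as $|w|\to\infty$; unlike the white-noise case $H=1/2$, disjoint backward light cones do \emph{not} give exact independence here, since the fractional noise is correlated across disjoint regions. I would establish it through a covariance estimate rather than an integrated decay bound (the latter in fact fails to be summable against $|w|^{2H-2}$ when $H\ge 3/4$, which is precisely why the rescaling argument above, using pointwise convergence plus domination on a bounded set, is the robust route). Concretely, using the covariance identity $\operatorname{Cov}(F,G)=\E\int_0^\infty\langle \E[D_\theta F\mid\mathcal{F}_\theta],\E[D_\theta G\mid\mathcal{F}_\theta]\rangle_{\HH_0}\,d\theta$ together with $|D_{r,w}\sigma(u(s,y))|\le L\,|D_{r,w}u(s,y)|$ and the bound $\|D_{r,w}u(s,y)\|_2\le C\mathbf{1}_{\{|y-w|\le s-r\}}$ from Lemma \ref{lemma: iteration}, I would reduce to
\[
\big|\operatorname{Cov}\big(\sigma(u(s,y)),\sigma(u(s,z))\big)\big|\le C\,\alpha_H\int_0^s\int_{\R^2}\mathbf{1}_{\{|y-w|\le s-r\}}\mathbf{1}_{\{|z-w'|\le s-r\}}|w-w'|^{2H-2}\,dw\,dw'\,dr,
\]
and then apply Lemma \ref{lem1} (with both radii equal to $s-r$) to see that the right-hand side behaves like $C\,|y-z|^{2H-2}$ as $|y-z|\to\infty$, hence tends to $0$. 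This confirms $\rho_s(w)\to\eta^2(s)$ and closes the argument.
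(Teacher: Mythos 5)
Your proof is correct, and it rests on the same three ingredients as the paper's argument: the Dalang--Walsh isometry for the colored noise, the Clark--Ocone covariance identity combined with Lemma \ref{lemma: iteration} to obtain the decorrelation $\rho_s(w)\to\eta^2(s)$ as $|w|\to\infty$, and Lemma \ref{lem1} to identify the constant $\alpha_H\int_{(-1,1)^2}|\tilde y-\tilde z|^{2H-2}\,d\tilde y\,d\tilde z=2^{2H}$. Where you genuinely differ is in how the limit is organized. The paper writes $\Psi(s,\xi)=\eta^2(s)+T(s,y,z)$, proves the \emph{uniform-in-$s$} vanishing \eqref{ecu11}, and then shows the $T$-contribution is negligible by splitting the $\xi$-integral into $|\xi|\le K_\varepsilon$ (killed by the factor $R^{1-2H}$ via Cauchy--Schwarz) and $|\xi|>K_\varepsilon$ (killed by uniform smallness of $T$ together with an approximation-of-the-identity computation for $\mathcal{A}(R)$), before treating the main term with a second dominated-convergence step. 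You instead perform a single change of variables $y=R\tilde y$, $z=R\tilde z$, under which every power of $R$ cancels, and apply dominated convergence once, using the pointwise limits $\varphi_R(s,R\tilde y)\to(t-s)\mathbf{1}_{(-1,1)}(\tilde y)$ and $\rho_s(R(\tilde y-\tilde z))\to\eta^2(s)$ together with the integrable majorant $C(t-s)^2|\tilde y-\tilde z|^{2H-2}\mathbf{1}_{\{|\tilde y|\vee|\tilde z|\le 2\}}$. This buys two simplifications: you only need pointwise (in $s$ and in the spatial lag) decorrelation rather than the uniform statement, and the $\varepsilon$--$K$ case analysis disappears. Your use of Lemma \ref{lem1} to read off the $|y-z|^{2H-2}$ decay of the covariance as a second difference of $|\cdot|^{2H}$ is also slightly sharper than the paper's crude bound $|\gamma-\beta|^{2H-2}\le(|y-z|-2s)^{2H-2}$, though either suffices; and your remark that an integrated covariance bound would fail to be summable against $|w|^{2H-2}$ for $H\ge 3/4$ correctly pinpoints why the pointwise-plus-domination route is the robust one.
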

\begin{proof}
Thanks to the It\^o isometry,  we have 
\[
\E  [ G_R^2] =   \alpha_H \int_0^t \int_{\R^2} \varphi_R(s,y)  \varphi_R(s,z)  \E\big[ \sigma (u(s,y))\sigma (u(s,z)) \big]   | y-z| ^{2H-2} dydzds,
\]
where $\alpha_H = H(2H-1)$. Keeping in mind   that $ \big\{\sigma\big(u(t,x)\big), x\in\RR\big\}$ is stationary, we  write
$
 \E\big[ \sigma (u(s,y))\sigma (u(s,z)) \big] =: \Psi (s, y-z).
 $
 Then,
\[
\E  [ G_R^2] =   \alpha_H \int_0^t \int_{\R^2} \varphi_R(s,\xi+z  )\varphi_R(s,z)    \Psi(s,\xi)  | \xi| ^{2H-2} d\xi dzds.
\]
We claim that
 \begin{equation} \label{ecu11}
 \lim_{|\xi|\to +\infty}  \sup_{0\le s\le t } | \Psi(s,\xi)  -\eta^2(s)|=0.
 \end{equation}
 In order to show (\ref{ecu11}), we apply a two-parameter version    of the Clark-Ocone formula (see \emph{e.g.} \cite[Proposition 6.3]{CKNP19}).    We can write
\[
  \sigma (u(s,y)) = \E[ \sigma (u(s,y))]
 + \int_0^s    \int_{\R}  \E\Big[  D_{r,\gamma}\big( \sigma (u(s,y)) \big) | \mathcal{F}_r\Big]  \, W(dr, d\gamma)
\]
and
\[
  \sigma (u(s,z)) = \E[ \sigma (u(s,z))]
 + \int_0^s    \int_{\R}  \E\Big[  D_{r,\beta}\big( \sigma (u(s,z))\big) | \mathcal{F}_r\Big] \, W(dr, d\beta).
\]
As a consequence,
\begin{equation} \label{ecu4}
 \E\Big[ \sigma \big(u(s,y)\big)\sigma \big(u(s,z)\big) \Big]  = \eta^2(s)+ T(s, y,z),
 \end{equation}
 where
\begin{align}
T(s, y,z)  \notag
&=\int_0^s    \int_{\R^2}   \E\Big\{  \E\Big[  D_{r,\gamma}\big( \sigma (u(s,y)) \big) | \mathcal{F}_r\Big]  \E\Big[ D_{r,\beta}\big( \sigma (u(s,z))\big) | \mathcal{F}_r\Big]  \Big\} \\  \label{fb2}
&\qquad\qquad\qquad\qquad\times  | \gamma-\beta|^{2H-2} d\gamma d\beta dr.
\end{align}
By the chain-rule for the derivative operator (see \cite[Proposition 1.2.4]{Nualart}),  
$$
 D_{r,\gamma}\big( \sigma (u(s,y)) \big) = \Sigma(s,y) D_{r,\gamma} u(s,y)$$
and 
$$ D_{r,\beta}\big( \sigma (u(s,z)) \big)= \Sigma(s,z) D_{r,\beta} u(s,z)
 $$
 with $\Sigma(s,y)$  an adapted random field  uniformly bounded by   the Lipschitz constant of $\sigma$, denoted by $L$. This implies,  
 using  (\ref{ecu3}),
 \begin{align} \notag
 &\quad  \Big\vert \E\Big\{  \E\Big[  D_{r,\gamma}\big( \sigma (u(s,y)) \big) | \mathcal{F}_r\Big]  \E\Big[ D_{r,\beta}\big( \sigma (u(s,z))\big) | \mathcal{F}_r\Big]  \Big\}  \Big\vert \\  \label{fb1}
 &  \leq   L^2 \|D_{r,\gamma} u(s,y)\|_2   \|D_{r,\beta} u(s,z)\|_2 \leq C  \mathbf{1}_{\{ |\gamma-y| \le s-r\}}\mathbf{1}_{\{ |\beta-z| \le s-r\}},
 \end{align}
 for some constant $C$. Therefore,  substituting (\ref{fb1}) into  (\ref{fb2}), we can write
  \begin{align}
   \vert T(s,y,z)  \vert     \leq   C  \int_0^s    \int_{\R^2}  \mathbf{1}_{\{ |\gamma-y| \le s-r\}}\mathbf{1}_{\{ |\beta-z| \le s-r\}} 
  | \gamma-\beta|^{2H-2} d\gamma d\beta dr. \label{exp1}
  \end{align}
If $ |y-z|  > 2s$, we have   
 $$
 \mathbf{1}_{\{ |\gamma-y| \le s-r\}}\mathbf{1}_{\{ |\beta-z| \le s-r\}}  | \gamma-\beta|^{2H-2} \leq  \mathbf{1}_{\{ |\gamma-y| \le s-r\}}\mathbf{1}_{\{ |\beta-z| \le s-r\}} \big( \vert y-z\vert -2s \big)^{2H-2}
 $$
 and therefore deduce from \eqref{exp1} that  (for $ |y-z|  > 2s$)
 \begin{align*}
   \vert T(s,y,z)  \vert  &   \leq   C  \int_0^s    \int_{\R^2} \mathbf{1}_{\{ |\gamma-y| \le s-r\}}\mathbf{1}_{\{ |\beta-z| \le s-r\}} \big( \vert y-z\vert -2s \big)^{2H-2} d\gamma d\beta dr \\
   &\leq  4Ct^3\big( \vert y-z\vert -2t \big)^{2H-2} \xrightarrow{\vert y-z\vert\to+\infty} 0 \,.
  \end{align*}
 Thus, claim \eqref{ecu11} is established  in view of formula (\ref{ecu4}).
 
  \medskip

Let us continue our proof of Proposition \ref{pro:covariance1}. We first show  that  the quantity
\begin{align}\label{quan}
 \frac{1}{R^{2H}}    \int_0^t \int_{\R^2}  \varphi_R(s,\xi+z  )\varphi_R(s,z)    \big[ \Psi(s,\xi) -  \eta^2(s)\big]    \vert \xi \vert ^{2H-2} d\xi dzds 
 \end{align}
converges to zero, as $R\to+\infty$.

By \eqref{ecu11}, we can find $K = K_\varepsilon > 0$ for any given $\varepsilon > 0$  such that  
   $$
\sup\Big\{  | \Psi(s, \xi) -\eta^2(s)| : s\in[0,t], | \xi| > K  \Big\} < \varepsilon.
   $$ 
    Now we divide the above integration domain into two parts $\vert \xi \vert\leq K$ and $\vert \xi \vert > K$.

\medskip
\noindent
{\it  Case (i):}    On the region $| \xi | \le K$,
by Cauchy-Schwarz inequality and \eqref{use1}, we get for $R\ge 2t$
\begin{align*}
\int_\RR  \varphi_R(s,\xi+z  )\varphi_R(x,z) \, dz& \leq \left( \int_\RR  \varphi_R^2(s,\xi+z  )\, dz \right)^{1/2} \left( \int_\RR  \varphi^2_R(s,z  )\, dz \right)^{1/2} \\
&= \int_\RR  \varphi^2_R(s,z  )\, dz =  2R(t-s)^2 \Big( 1- \frac{t-s}{3R} \Big) \,.
\end{align*}
Since   $\Psi(s,y-z) - \eta^2(s) = T(s,y,z )$ is uniformly bounded for $(s,y,z)\in[0,t]\times\R^2$,
\begin{align*}
&\quad R^{-2H}  \int_0^t \int_{\RR^2}  \varphi_R(s,\xi+z  )\varphi_R(s,z)  \big| \Psi(s,\xi) - \eta^2(s)  \big|  \mathbf{1}_{\{ \vert\xi\vert \leq K\}}     \vert \xi \vert ^{2H-2} \, d\xi dzds \\
&\leq C R^{-2H} \int_0^t \int_{-K}^{K} \left(    \int_\RR  \varphi_R(s,\xi+z  )\varphi_R(x,z) \, dz \right) \vert \xi \vert ^{2H-2} \, d\xi ds \\
&\leq  C R^{1-2H} \int_0^t (t-s)^2 \int_{-K}^{K} \vert \xi \vert ^{2H-2} \, d\xi ds \xrightarrow{R\to+\infty} 0 \,.
\end{align*}

\medskip
\noindent
{\it  Case (ii):}  On the region $| \xi | > K$, we know $| \Psi(s, \xi) -\eta^2(s)| < \varepsilon$ for $s\leq t$. Thus, 
    \begin{align*}
&  \quad R^{-2H}  \int_0^t \int_{\RR^2}  \varphi_R(s,\xi+z  )\varphi_R(s,z)  \big| \Psi(s,\xi) - \eta^2(s)  \big| \mathbf{1}_{\{ \vert\xi\vert > K\}}     \vert \xi \vert ^{2H-2} \, d\xi dzds\\
&\leq  \frac{\varepsilon}{R^{2H}}     \int_0^t \int_{\RR^2}  \varphi_R(s,\xi+z  )\varphi_R(s,z)     \vert \xi \vert ^{2H-2} \, d\xi dzds   \\
& = \frac{\varepsilon}{4R^{2H}}    \int_{-R}^R   \int_{-R}^R dx  dx'  \,\,    \int_0^t \int_{\RR^2} \mathbf{1}_{\{ \vert x-z\vert \leq t-s \}} \mathbf{1}_{\{ \vert x'-z'\vert \leq t-s \}}  \vert z-z'\vert^{2H-2}  ds dzdz'     \\
& \leq \frac{\varepsilon  t}{4R^{2H}}    \int_{-R}^R   \int_{-R}^R dx  dx'  \,\,     \int_{\RR^2} \mathbf{1}_{\{ \vert x-z\vert \leq t \}} \mathbf{1}_{\{ \vert x'-z'\vert \leq t \}}  \vert z-z'\vert^{2H-2}   dzdz'  \\
&=:  \varepsilon t^3 \mathcal{A}(R) \,.
\end{align*}
 We can rewrite $ \mathcal{A}(R)$, after  a change of variables and supposing $R>t$, in the following form
\begin{align*}
 \mathcal{A}(R) &= \int_{[-1,1]^2} \int_{\RR^2} \frac{R}{2t} \mathbf{1}_{\{ \vert x-z\vert \leq tR^{-1} \}}  \frac{R}{2t} \mathbf{1}_{\{ \vert x'-z'\vert \leq tR^{-1} \}}  \vert z-z'\vert^{2H-2}  \, dzdz' dxdx' \\
 &=  \int_{[-1,1]^2} \big(\zeta_R\ast g_2\big)(x, x')dx\, dx'  \xrightarrow{R\to+\infty} \| g_1\| _{L^1(\R^2)} \,, 
\end{align*}
where $g_m(z,z') :=  \vert z-z'\vert^{2H-2}  \mathbf{1}_{\{z\neq z'\}} \mathbf{1}_{\{z, z'\in[-m,m]\}}$, for $m=1,2$, are   integrable functions   on $\R^2$ and 
\[
\Big\{ \zeta_R(x,x') =   \frac{R}{2t} \mathbf{1}_{\{ \vert x\vert \leq tR^{-1} \}}  \frac{R}{2t} \mathbf{1}_{\{ \vert x'\vert \leq tR^{-1} \}} \,,\,\, R > t \Big\}
\]
defines an approximation of the identity.     This     leads to the asymptotic negligibility of the quantity \eqref{quan},  as $\varepsilon > 0$ is arbitrary.

Therefore, it suffices to show that
\begin{align}   \notag 
& \quad  \lim_{R\rightarrow \infty}  \frac{\alpha_H}{R^{2H}}    \int_0^t \eta^2(s)  \int_{\R^2}  \varphi_R(s,\xi+z  )\varphi_R(s,z)          \vert \xi \vert ^{2H-2} d\xi dzds  \\
 & = 2^{2H} \int_0^t(t-s)^2 \eta^2(s)ds. \label{quan2}
 \end{align}
The previous computations 
  imply that
\[
  \frac{1}{R^{2H}}\int_{\R^2}  \varphi_R(s,\xi+z  )\varphi_R(s,z)      \vert \xi \vert ^{2H-2} d\xi dz \leq 4t^2  \mathcal{A}(R)
\]
is uniformly bounded over $s\in[0,t]$. Moreover,  we can get  
\begin{align}
&\quad \lim_{R\to+\infty}  \frac{\alpha_H}{R^{2H}}\int_{\R^2}  \varphi_R(s,\xi  )\varphi_R(s,z)      \vert \xi -z \vert ^{2H-2} d\xi dz  \notag \\
&= (t-s)^2  \alpha_H \| g_1\|_{L^1(\R^2)} = 2^{2H}(t-s)^2 \,, \label{for:nonchaotic}
\end{align}
where the last equality follows from Lemma \ref{lem1}.   Hence  (\ref{quan2}) follows by  the dominated convergence theorem and this concludes our proof. \end{proof}

 It follows from the above two propositions that for fixed $t>0$,   the variance of $G_R(t)$, denoted by $\sigma_R^2$,   is $O(R^{2H})$.  
The next lemma  states that $R^{2H}$ is the exact order  under our standing assumption $\sigma(1) \not =0$, which is also a necessary condition to have this order.
Moreover,   $\sigma(1) \not =0$ is equivalent to $\sigma_R >0$ for all $R>0$. 
\begin{lemma} \label{lema1}
The following four conditions are equivalent:
\begin{itemize}
\item[(i)] $\sigma(1) =0$.
\item[(ii)]  $ \sigma_R =0$ for all $R>0$.
\item[(iii)]  $ \sigma_R =0$ for some $R>0$.
\item[(iv)] $\lim_{R\to\infty}\sigma_R^2 R^{-2H} =0$.
\end{itemize}
\end{lemma}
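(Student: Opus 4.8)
The plan is to prove the cyclic chain $(i)\Rightarrow(ii)\Rightarrow(iii)\Rightarrow(i)$ together with $(ii)\Rightarrow(iv)\Rightarrow(i)$; since $(ii)$ asserts $\sigma_R=0$ for \emph{all} $R>0$, the implications $(ii)\Rightarrow(iii)$ and $(ii)\Rightarrow(iv)$ are immediate. Thus the real content lies in $(i)\Rightarrow(ii)$, $(iii)\Rightarrow(i)$, and $(iv)\Rightarrow(i)$.

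For $(i)\Rightarrow(ii)$ I would argue by uniqueness of the mild solution. If $\sigma(1)=0$, then the constant field $u\equiv 1$ solves \eqref{eq: mild}, because $\sigma(u(s,y))\equiv\sigma(1)=0$ kills the stochastic integral. Hence $u(t,x)=1$ for every $(t,x)$, so $\int_{-R}^R u(t,x)\,dx=2R$, giving $G_R\equiv 0$ and $\sigma_R=0$ for all $R>0$.

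The remaining two implications share two ingredients. The first is the It\^o isometry in the unified form $\sigma_R^2=\int_0^t\E\big[\,\|\varphi_R(s,\cdot)\,\sigma(u(s,\cdot))\|_{\HH_0}^2\,\big]\,ds$, valid for every $H\in[1/2,1)$. The second is the one-sided continuity $\lim_{s\downarrow 0}\xi(s)=\sigma^2(1)$ and $\lim_{s\downarrow 0}\eta(s)=\sigma(1)$, which follows from the $L^2$-continuity of $u$ at the initial time and the Lipschitz bound $\|\sigma(u(s,y))-\sigma(1)\|_2\le L\,\|u(s,y)-1\|_2\to 0$. For $(iii)\Rightarrow(i)$, assume $\sigma_{R_0}=0$ for some $R_0>0$; the isometry then forces $\varphi_{R_0}(s,\cdot)\,\sigma(u(s,\cdot))=0$ in $\HH_0$ for a.e.\ $s\in[0,t]$, a.s. Using that the natural inclusion of functions into $\HH_0$ is injective --- trivially for $H=1/2$, where $\HH_0=L^2(\R)$, and for $H\in(1/2,1)$ through the continuous embedding $L^{1/H}(\R)\hookrightarrow\HH_0$ (equivalently, strict positive definiteness of the kernel $|x-y|^{2H-2}$) --- this product must vanish a.e. Since $\varphi_{R_0}(s,\cdot)>0$ on $(-R_0,R_0)$ for each $s<t$, we get $\sigma(u(s,y))=0$ a.e.\ on $(-R_0,R_0)$, a.s., and stationarity upgrades this to $\xi(s)=0$ for a.e.\ $s\in[0,t]$; letting $s\downarrow 0$ yields $\sigma^2(1)=0$.

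Finally, $(iv)\Rightarrow(i)$ is read off from the asymptotic variance. For $H=1/2$, Proposition \ref{pro:covariance1} gives $\sigma_R^2R^{-1}\to 2\int_0^t(t-s)^2\xi(s)\,ds$, so $(iv)$ forces $\int_0^t(t-s)^2\xi(s)\,ds=0$; as $(t-s)^2>0$ and $\xi\ge 0$, we get $\xi=0$ a.e.\ and hence $\sigma(1)=0$ by continuity. For $H\in(1/2,1)$, Proposition \ref{pro:covariance2} gives $\sigma_R^2R^{-2H}\to 2^{2H}\int_0^t(t-s)^2\eta^2(s)\,ds$, and the identical argument applied to $\eta$ produces $\sigma(1)=\lim_{s\downarrow 0}\eta(s)=0$. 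The main obstacle I anticipate is the $(iii)\Rightarrow(i)$ step in the colored regime: one must argue carefully that vanishing of the $\HH_0$-seminorm of a (compactly supported, a.s.\ $L^{1/H}$) random function forces it to be zero pointwise a.e., which is exactly where the positive definiteness of the fractional kernel enters; the continuity of $\xi,\eta$ at $s=0$, though elementary, is also essential and should be recorded.
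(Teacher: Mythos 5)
Your proposal is correct, and for three of the four nontrivial implications --- $(i)\Rightarrow(ii)$ via $u\equiv 1$, the trivial ones out of $(ii)$, and $(iv)\Rightarrow(i)$ via the asymptotic variance formulas of Propositions \ref{pro:covariance1} and \ref{pro:covariance2} followed by $L^2$-continuity at $s=0$ --- it coincides with the paper's argument. The genuine difference is in $(iii)\Rightarrow(i)$. Both arguments start from the isometry identity, which forces $\big\| \varphi_{R_0}(s,\cdot)\,\sigma(u(s,\cdot))\big\|_{\HH_0}^2=0$ a.s.\ for a.e.\ $s$ (equation \eqref{a11}). You then invoke injectivity of the embedding of (locally bounded, compactly supported) functions into $\HH_0$ --- i.e.\ strict positive definiteness of the kernel $|x-y|^{2H-2}$, seen for instance through the spectral representation $c_H\int_{\R}|\hat\varphi(\lambda)|^2|\lambda|^{1-2H}d\lambda$ --- to conclude $\sigma(u(s,y))=0$ pointwise a.e.\ on $(-R_0,R_0)$, whence $\xi(s)=0$ by stationarity and $\sigma(1)=0$ by continuity. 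The paper instead takes expectations in \eqref{a11}, rewrites the result as $\int_{\R^2}\varphi_{R_0}(s,y)\varphi_{R_0}(s,z)\,\Psi(s,y-z)\,|y-z|^{2H-2}\,dy\,dz=0$ with $\Psi(s,\xi)=\E[\sigma(u(s,0))\sigma(u(s,\xi))]$, and derives a contradiction from the continuity of $\Psi$ alone: if $\sigma(1)\neq 0$, then $\Psi\ge |\sigma(1)|^2/2$ on $[0,\delta]\times[-R_0-t,R_0+t]^2$ for small $\delta$, making the integral strictly positive for a.e.\ small $s$. The paper's route is more economical because it needs only that the integral of a strictly positive continuous function against the positive kernel is positive, sidestepping both the injectivity of $L^{1/H}(\R)\hookrightarrow\HH_0$ (note that continuity of the embedding, which is what \cite{MMV01,PipirasTaqqu} are cited for, does not by itself give injectivity) and the pathwise regularity of $y\mapsto u(s,y)$ needed to apply that injectivity $\omega$-by-$\omega$; your route is also valid, and you correctly flag these as the points requiring justification, but they would have to be supplied to make the step complete.
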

\begin{proof}  If $\sigma(1)=0$, then writing the solution as the limit of the Picard iterations starting with the constant solution  $1$, we obtain that $u(t,x)=1$ for all $(t,x)$. As a consequence,  $\sigma_R=0$ for all $R>0$ and (i) implies (ii).  Clearly (ii) implies (iii) and (iv). 
Now suppose that  (iv) holds. Then  Propositions \ref{pro:covariance1} and \ref{pro:covariance2} imply that   for almost every $s\in[0,t]$,
 \begin{itemize}
 \item[a)] $\E\big[ \sigma^2( u(s,y) ) \big] =0$ in the case $H=1/2$, 
\item[b)]  $\E\big[ \sigma( u(s,y) ) \big] =0$ in the case $H\in (1/2,1)$. 
\end{itemize}
By the $L^2(\Omega)$-continuity of the process $(s,y)\in\R_+\times\R \longmapsto u(s,y)$ (see \emph{e.g.} \cite[Theorem 13]{Dalang99}),  letting $s$ tend to $0$, we deduce that  $\sigma(1) = 0$ in both cases $H=1/2$ and $H\in(1/2, 1)$.

Finally, suppose that (iii) holds and assume that 
$H\in (1/2,1) $ (the proof in  the case $H= 1/2$ is similar).    By $L^2$-continuity, we can see that the function $\Psi(s,y):=  \E\big[ \sigma (u(s,0)) \sigma (u(s,y)) \big] $ is continuous on $\R_+\times\R$. 
Note that,  for  almost all $s \in [0,t]$, 
\begin{equation} \label{a11}
  \int_{\R^2} \varphi_R(s,y)\varphi_R(s,z)  \sigma (u(s,y)) \sigma (u(s,z))    |y-z| ^{2H-2} dydz  =0 
  \end{equation}
almost surely.  In the above integral, the variables $y$ and $z$ have support contained  in the interval $[-R-t, R+t]$. If $\sigma(1) \neq 0$, there exists a sufficiently small $\delta >0$ such that  for    $(s, y, z)\in[0,\delta] \times [ -R-t, R+t]^2$
      $$
    \E\big[  \sigma (u(s,y)) \sigma (u(s,z))   \big] =  \Psi(s, y-z)  \geq \vert \sigma(1)\vert^2/2  \,,
      $$ which is a contradiction to (\ref{a11}). Therefore, $\sigma(1) =0$ and (iii) implies 
  (i).
 \end{proof}

  \begin{remark}\label{rem:nonchaotic}
  It follows from Proposition \ref{pro:covariance2} that, if $H\in (1/2,1)$, the random variable $G_R$ is {\it not chaotic} in the linear case.  More precisely, when $\sigma(x)= x$,  the above proposition gives us 
  \[
{  \rm Var}\big( G_R) \sim R^{2H}   2^{2H} \int_0^t (t-s)^2 ds = \frac{ 4^Ht^3}{3} R^{2H} \quad \text{as $R\to+\infty$.}
  \]
 Due to linearity, one can obtain the Wiener-chaos expansion of $G_R$ easily:
  \[
  G_R =   \int_0^t\int_\R \varphi_R(s,y) \, W(ds, dy)  + \text{\it higher-order chaoses.}
  \]
 Then, the variance of the first chaos is equal to 
  \[
  \int_0^t \alpha_H \int_{\R^2}  \varphi_R(s,y) \varphi_R(s,z) \vert  y -z\vert^{2H-2} \, dydzds \sim  \frac{ 4^Ht^3}{3} R^{2H} \quad \text{as $R\to+\infty$,}
  \]
 which is a consequence of   \eqref{for:nonchaotic} and dominated convergence.  This shows that only the first chaos contribute to the limit, that is, there is a non-chaotic behavior of the spatial average of the \emph{linear} stochastic wave equation, when $H\in (1/2,1)$.
 
 For $H=1/2$ and $\sigma (x)=x$, we obtain from Proposition \ref{pro:covariance1}
   \[
{  \rm Var}\big( G_R) \sim 2R  \int_0^t (t-s)^2 \E[u^2(s,x)] ds \quad \text{as $R\to+\infty$,}
  \]
  whereas the variance of the  projection on the first chaos is, using Lemma  \ref{rem1},
  \[
    \int_0^t\int_\R \varphi_R^2(s,y) dyds  = \frac 23 Rt^3- \frac {t^4}{6}.
    \]
Notice that $ \E[u^2(s,x)] \ge ( \E [u(s,x) ])^2 =1$ and the inequality is strict for all $s\in (0,t]$  (otherwise $u(s,x)$ would be a constant). This  implies that   the first chaos  is not the only contributor to the limiting variance.
  \end{remark}

Before we give the proof of   Theorem \ref{thm:TV-distance}, by using the same argument as in the   proof of Propositions \ref{pro:covariance1} and \ref{pro:covariance2}, we obtain an asymptotic  formula for $\E\big[ G_R(t_i) G_R(t_j) \big]$ with $t_i, t_j\in\R_+$, which is a useful ingredient for our proof of functional central limit theorem. 

\begin{remark}\label{rem2} Suppose  $t_i , t_j\in\R_+$.  If $H=1/2$, we have
\begin{align*}
  \E\big[ G_R(t_i) G_R(t_j) \big] =   \int_0^{t_i\wedge t_j} \int_{\R} \varphi^{(i)}_R(s,y)  \varphi^{(j)}_R(s,y)  \xi(s)      dyds\,,
\end{align*}
where $ \varphi^{(i)}_R(s,y) =   \frac{1}{2} \int_{-R}^R  \mathbf{1}_{\{|x-y| \le t_i -s\}} \,  dx$ and we obtain
\begin{align*}
  \lim_{R\to+\infty}   \frac 1R \E\big[ G_R(t_i) G_R(t_j) \big]    =2  \int_0^{t_i\wedge t_j}  (t_i-s)(t_j-s) \xi(s) \, ds \,.
\end{align*}
In the case $H\in (1/2,1)$, we have $\E\big[ G_R(t_i) G_R(t_j) \big] $ equal to
\begin{align*}
    \alpha_H \int_0^{t_i\wedge t_j} \int_{\R^2} \varphi^{(i)}_R(s,y)  \varphi^{(j)}_R(s,z) \Psi(s,y-z)     | y-z| ^{2H-2} dydzds\,,
\end{align*}
 and we obtain
\begin{align*}
&\quad \lim_{R\to+\infty}  R^{-2H}  \E\big[ G_R(t_i) G_R(t_j) \big]   \\
&= \lim_{R\to+\infty} \alpha_H \int_0^{t_i\wedge t_j} ds~ \eta^2(s)   \int_{\R^2} \varphi^{(i)}_R(s,y)  \varphi^{(j)}_R(s,z)    | y-z| ^{2H-2} dydz \\
&= 2^{2H}   \int_0^{t_i\wedge t_j}  (t_i-s)(t_j-s) \eta^2(s) \, ds \,.
\end{align*}
\end{remark}

\bigskip

Now let us prove Theorem \ref{thm:TV-distance}.

\begin{proof}[Proof of Theorem \ref{thm:TV-distance}]

By Proposition  \ref{lem: dist},   if  $F = \delta(v) \in \mathbb{D}^{1,2}$ 
with $\E (F^2)=1$, we have
$$
d_{\rm TV}(F,Z)\leq 2\sqrt{{\rm Var}\big[\langle DF,v\rangle_{\HH}\big]}.
$$
Recall that in our case we have, as a consequence of  Fubini's theorem, that
\begin{equation*}
\begin{split}
F_R:=F_R(t) &= \frac{1}{\sigma_R} \int_{-R}^R \big[ u(t,x) - 1 \big]dx \\
&=  \frac{1}{\sigma_R}  \int_0^t \int_{\R} \varphi_R(s,y) \sigma (u(s,y)) W(ds,dy).
\end{split}
\end{equation*}
Similarly as in   \eqref{eq: mild Skorohod}, we can write, for any fixed $t > 0$,  
$
F_R =\delta(v_R)$   with $v_R(s,y) = \sigma_R^{-1}\mathbf{1} _{[0,t]}(s) \varphi_R(s,y) \sigma (u(s,y)).
$
Moreover,
$$
D_{s,y}F_R =\mathbf{1} _{[0,t]}(s) \frac{1}{\sigma_R}\int_{-R}^RD_{s,y}u(t,x)dx.
$$
Then, it follows from \eqref{ecu1} and Fubini's theorem that
 \begin{align*}  
&\quad \int_{-R}^RD_{s,y} u(t,x) \, dx  \\
&=  \varphi_R(s,y)  \sigma(u(s,y))     + \int_s^t \int_{\R}   \varphi_R(r,z)  \Sigma(r,z) D_{s,y} u(r,z) W(dr,dz).
\end{align*}
In what follows, we separate our proof into two cases:  $H=  1/2$ and $H>  1/2$.

\medskip
\noindent
{\it Case $H=  1/2$.} ~ We   write
 \[
\langle DF_R,v_R\rangle_{\HH}  := B_1 +B_2,
\]
where
\[
B_1=  \frac { 1} {\sigma_R^2} \int_0^t \int_{\R}  \varphi_R^2(s,y)   \sigma ^2(u(s,y)) \,ds\, dy
\]
and
\begin{align*}
 B_2  &=  \frac { 1 } {\sigma^2_R} \int_0^t \int_{\R} \varphi_R(s,y) \sigma(u(s,y)) \\
&\qquad \qquad \times \left( \int_s^t\int_\R \varphi_R(r,z) \Sigma(r,z) D_{s,y} u(r,z) W(dr,dz) \right)  \,    dyds.
\end{align*}
Notice that   for any process $\Phi= \{\Phi(s), s\in [0,t]\}$ such that  $ \sqrt{{\rm Var} (\Phi_s)} $ is integrable on $[0,t]$, it holds that
 \begin{align}
  \sqrt{ {\rm Var}  \left( \int_0^t \Phi_s ds \right)} \le \int_0^t  \sqrt{{\rm Var} (\Phi_s)} ds \,. \label{FACT00}
  \end{align}
So  we can write
 \[
 \sqrt{ {\rm Var} \big[ \langle DF_R,v_R\rangle_{\HH} \big] } \le  \sqrt{2}\Big(  \sqrt{ {\rm Var}(B_1) } +  \sqrt{ {\rm Var}(B_2) } \Big) \leq \sqrt{2}(A_1+A_2) \,,
 \]
 with  
\begin{align*}
  A_1 &=\frac{1}{\sigma^2_R} \int_0^t    \left(   \int_{\R^2}  \varphi^2_R(s,y)\varphi^2_R(s,y') 
{\rm Cov} \Big[ \sigma^2 \big(u(s,y)\big)  , \sigma ^2\big(u(s, y')\big)  \Big] dydy'
 \right)^{1/2} ds
\intertext{and}
  A_2 &=  \frac{1}  {\sigma_R^2} \int_0^t \Bigg(   \int_{\R^3}  \int_s^t   \varphi^2_R(r,z)  \varphi_R(s,y)   \varphi_R(s,y') \\
  &\times
     \E  \Big[    \Sigma^2(r,z) D_{s,y} u(r,z)  D_{s,y'} u(r,z) 
  \sigma\big(u(s,y)\big)\sigma\big(u(s,y')\big) \Big] \, dydy'    dz  dr
   \Bigg)^{  1/2} ds \,.
 \end{align*}
 Then the rest of the proof for this case ($H=1/2$) consists in estimating $A_2$ and $A_1$. The proof will be done in two steps.
 
 \medskip
 \noindent {\it  Step 1:} ~ Let us proceed with the estimation of $A_2$.  As before, denote by $L$ the  Lipschitz constant of $\sigma$ and  for $p\ge 2$, as a consequence of stationarity, we write 
 \begin{equation} \label{Kp}
 K_p(t) =\sup_{0\le s\le t} \sup_{y\in \R} \| \sigma(u(s,y)) \|_p = \sup_{0\le s\le t}   \| \sigma(u(s,0)) \|_p.
 \end{equation}
 Then, 
    \begin{align*}
&  \quad   \Big\vert  \E  \Big(    \Sigma^2(r,z) D_{s,y} u(r,z)  D_{s,y'} u(r,z) \sigma\big(u(s,y)\big)\sigma\big(u(s,y')\big) \Big)  \Big\vert \\
&  \le K_4^2(t) L^2 \| D_{s,y} u(r,z) \|_4 \| D_{s, y'} u(r,z) \|_4 \leq CK_4^2(t) L^2  \mathbf{1}_{\{|y-z| \le r-s\}}     \mathbf{1}_{\{|y'-z| \le r-s\}} \,,
 \end{align*}
 where the last inequality follows from  Lemma \ref{lemma: iteration}. This implies, together with Proposition \ref{pro:covariance1}, that, for any $R\ge 2t$, 
          \begin{align*}
  A_2 \leq&  \frac{C}  {R} \int_0^t \Bigg(   \int_{\R^3}  \int_s^t   \varphi_R^2(r,z)  \varphi_R(s,y)   \varphi_R(s,y') \\
  &\qquad    \qquad\times
     \mathbf{1}_{\{|y-z| \le r-s\}}\mathbf{1}_{\{|y' -z| \le r-s\}} dydy'    dz  dr
   \Bigg)^{  1/2} ds .
 \end{align*}
 Using first  $\varphi_R(s,y)   \varphi_R(s,y') \le (t-s)^2$ and then integrating in $y$ and $y'$, we obtain
\[
  A_2 \leq  \frac{C}  {R} \int_0^t \left(    \int_s^t  \int_{\R}  \varphi_R^2(r,z)  
    dz  dr
   \right)^{  1/2} ds \leq \frac{C}  {R} \int_0^t \left(    \int_s^t    2R(t-r)^2  dr
   \right)^{  1/2} ds \,,
\]
where the last inequality follows from \eqref{use1}. Therefore, we have $A_2
 \le   C/ \sqrt{R}$ for any $R\geq 2t$.

 \medskip
 \noindent {\it Step 2:}  ~ Consider now the term $A_1$.
We begin with a bound for the covariance 
 \[
   {\rm Cov} \Big[ \sigma^2 \big(u(s,y)\big)  , \sigma ^2\big(u(s, y')\big)  \Big] \,.
  \]
Using  a version of Clark-Ocone formula for two-parameter processes, we    write
\[
\sigma^2 (u(s,y))   =  \E\big[\sigma^2 (u(s,y))  \big] +  \int_0^s \int_{\R} \E\big[ D_{r,z} \big(\sigma^2(u(s,y))\big) | \mathcal{F}_r \big] \, W(dr,dz).
\]
  Then, ${\rm Cov} \Big[ \sigma^2 \big(u(s,y)\big)  , \sigma ^2\big(u(s, y')\big)  \Big]$ is equal to
       \begin{align*}
         \int_0^s \int_{\R} \E \Big\{ \E\big[ D_{r,z} \big(\sigma^2(u(s,y)) \big)  | \mathcal{F}_r \big] 
        \E\big[ D_{r,z} \big( \sigma^2(u(s,y'))\big)   | \mathcal{F}_r \big]  \Big\}
         dzdr .
                 \end{align*}
                 By  the chain rule, we have
$
                 D_{r,z} \big(\sigma^2(u(s,y)) \big)  =  2 \sigma(u(s,y)) \Sigma(s,y) D_{r,z} u(s,y) 
$, thus             
                 $
              \left\|  \E[ D_{r,z} \big(\sigma^2(u(s,y)) \big) | \mathcal{F}_r ]    \right\| _2 \le 2K_4(t) L  \left\| D_{r,z} u(s,y)\right\|_4 .
              $
                    Then, using  Lemma \ref{lemma: iteration},  we can write
        \begin{align*}
        &   \left|  {\rm Cov} \Big[ \sigma^2 \big(u(s,y)\big)  , \sigma ^2\big(u(s, y')\big)  \Big]  \right|  \le  C   \int_0^s \int_{\R }  \left\| D_{r,z} u(s,y)\right\|_4  \left\| D_{r,z} u(s,y')\right\|_4   dz dr \\
        & \quad \le C  \int_0^s \int_{\R }  \mathbf{1}_{\{ |y-z| \le s-r\}}     \mathbf{1}_{\{ |y'-z| \le s-r\}}     dz dr  \le C  \mathbf{1}_{\{|y-y'| \le 2s\}} \,.
                 \end{align*}
      This leads to the following estimate for $A_1$, for any $R\geq 2t$:
              \[
  A_1 \le \frac{C}{R} \int_0^t    \left(   \int_{\R^2}  \varphi^2_R(s,y)\varphi^2_R(s,y') 
   \mathbf{1}_{\{|y-y'| \le 2s\}} dydy'
 \right)^{ 1/2} ds.
\]
Since $  \varphi^2_R(s,y)\varphi^2_R(s,y')  \le (t-s)^4 \mathbf{1}_{\{|y| \vee |y'| \le R+t-s\}}$, we   get $A_1 \leq   C /\sqrt{R}$ for $R\geq 2t$. This concludes our proof for the case $H=1/2$.

\bigskip
 The proof for the other case is more involved but we can proceed in similar steps.

\medskip
\noindent
{\it Case $H >  1/2$}. ~  In this case, we   write
$
\langle DF_R,v_R\rangle_{\HH}  := B_1 +B_2,
$
where
\begin{align*}
B_1 &=  \frac { \alpha_H } {\sigma_R^2} \int_0^t \int_{\R^2}  \varphi_R(s,y)\varphi_R(s, y')   \sigma \big(u(s,y)\big)  \sigma\big(u(s,y')\big)    |y-y'| ^{2H-2} \, dydy'ds
\intertext{and}
B_2 &=   \frac { \alpha_H } {\sigma_R^2} \int_0^t \int_{\R^2} 
\left( \int_s^t \varphi_R(r,z) \Sigma(r,z) D_{s,y} u(r,z) \, W(dr,dz) \right) \\
&\qquad  \qquad\qquad\qquad\qquad  \times \varphi_R(s,y') \sigma\big(u(s,y')\big) | y-y'| ^{2H-2}~ dydy' ds.
\end{align*}
This decomposition implies
 $ \sqrt{ {\rm Var} \big[\langle DF_R,v_R\rangle_{\HH} \big] } \le  \sqrt{2}(A_1+A_2),
 $
 with  
 \begin{align*}
  A_1&=\frac{\alpha_H}{\sigma^2_R} \int_0^t    \Bigg(   \int_{\R^4}  \varphi_R(s,y)\varphi_R(s,y') 
\varphi_R(s, \tilde{y})\varphi_R(s, \tilde{y}')   |y-y'| ^{2H-2}  |\tilde{y}-\tilde{y}'| ^{2H-2}  \\
&\quad    \times {\rm Cov} \Big[ \sigma \big(u(s,y)\big)  \sigma\big(u(s,y')\big) , \sigma \big(u(s,\tilde{y})\big)  \sigma\big(u(s,\tilde{y}')\big) \Big] ~ dy dy' d\tilde{y}d\tilde{y}'
 \Bigg)^{1/2} ds
\intertext{and}
  A_2 &=  \frac{\alpha_H^{3/2}}  {\sigma_R^2} \int_0^t \Bigg(   \int_{\R^6}  \int_s^t   \varphi_R(r,z)\varphi_R(r,\tilde{z} )  \varphi_R(s,y')   \varphi_R(s,\tilde{y}') \\
  &\qquad\quad\times
     \E  \Big\{    \Sigma(r,z) D_{s,y} u(r,z)  \Sigma(r,\tilde{z}) D_{s,\tilde{y}} u(r,\tilde{z}) \sigma\big(u(s,\tilde{y}')\big)\sigma\big(u(s,y')\big) \Big\}  \\
& \qquad\qquad  \times   | y-y'| ^{2H-2} | \tilde{y}-\tilde{y}'| ^{2H-2}  |z-\tilde{z}| ^{2H-2} dydy'  d\tilde{y} d\tilde{y}'   dz d\tilde{z} dr
   \Bigg)^{1/2} ds \,.
 \end{align*}
 The proof will be done in two steps:
 
 \medskip
 \noindent {\it Step 1:} ~
 Let us first estimate the term $A_2$. Recall that  $L$ denotes the  Lipschitz constant of $\sigma$ and recall the notation $K_p(t)$ ($p\geq 2$) introduced in (\ref{Kp}). We can write 
    \begin{align*}
&\quad  \Big\vert    \E  \Big\{    \Sigma(r,z) D_{s,y} u(r,z)  \Sigma(r,\tilde{z}) D_{s,\tilde{y}} u(r,\tilde{z}) \sigma\big(u(s,\tilde{y}')\big)\sigma\big(u(s,y')\big) \Big\} \Big\vert  \\
&   \le K_4^2(t) L^2 \| D_{s,y} u(r,z) \|_4 \| D_{s, \tilde{y}} u(r,\tilde{z}) \|_4 \leq C ~ \mathbf{1}_{\{|y-z| \le r-s\}} \mathbf{1}_{\{|\tilde{y}-\tilde{z}| \le r-s\}}
 \end{align*}
where the last inequality follows from  Lemma \ref{lemma: iteration}.  

Now we  derive  from Proposition \ref{pro:covariance2} the following estimate: For fixed $t>0$, there exists a constant $R_t$ that depends on $t$ such that for any $R\geq R_t$,  
          \begin{align*}
  A_2 &\leq   \frac{C}  {R^{2H}} \int_0^t \Bigg(   \int_{\R^6}  \int_s^t   \varphi_R(r,z)\varphi_R(r,\tilde{z} )  \varphi_R(s,y')   \varphi_R(s,\tilde{y}') 
     \mathbf{1}_{\{|y-z| \le r-s, |\tilde{y}-\tilde{z}| \le r-s\}} \\
&\qquad\qquad \quad \times   | y-y'| ^{2H-2} | \tilde{y}-\tilde{y}'| ^{2H-2}  |z-\tilde{z}| ^{2H-2} dydy'  d\tilde{y} d\tilde{y}'   dz d\tilde{z} dr
   \Bigg)^{1/2} ds,
 \end{align*}
 where $C$ is a constant that depends on $t,p,H$ and $\sigma$.
 
 The  integral  in the spatial variable term can be rewritten as
        \begin{align*} 
  \mathbf{I}&:=   \frac{1}{16}   \int_{[-R,R]^4}    \int_{\R^6}      \mathbf{1}_{\{  \vert x-z\vert\leq t-r, \vert \tilde{x} -\tilde{z}\vert \leq t-r,   |x'-y'| \le t-s,   |\tilde{x}'-\tilde{y}'| \le t-s,  |y-z| \le r-s,   |\tilde{y}-\tilde{z}| \le r-s\}} \\
&\qquad\qquad  \times   | y-y'| ^{2H-2} | \tilde{y}-\tilde{y}'| ^{2H-2}  |z-\tilde{z}| ^{2H-2} ~ dx d\tilde{x} dx'   d\tilde{x}'dydy'  d\tilde{y} d\tilde{y}'   dz d\tilde{z} \\
&=    \frac{R^{6H+4}}{16}     \int_{[-1,1]^4}    \int_{\R^6}   \mathbf{1}_{\{  \vert x-z\vert\leq \frac{t-r}{R}, \vert \tilde{x} -\tilde{z}\vert \leq \frac{t-r}{R},   |x'-y'| \le \frac{t-s}{R},   |\tilde{x}'-\tilde{y}'| \le \frac{t-s}{R},  |y-z| \le \frac{r-s}{R},   |\tilde{y}-\tilde{z}| \le \frac{r-s}{R}\}} \\
&\qquad\qquad  \times   | y-y'| ^{2H-2} | \tilde{y}-\tilde{y}'| ^{2H-2}  |z-\tilde{z}| ^{2H-2} ~ dx d\tilde{x} dx'   d\tilde{x}'dydy'  d\tilde{y} d\tilde{y}'   dz d\tilde{z},
 \end{align*}
 where the second equality follows from a simple change of variables.  
  Assuming $R\ge t$  and integrating in the variables $x, x', \tilde{x}, \tilde{x}'\in[-1,1]$, we have 
\begin{align*}
   \mathbf{I} & \le R^{6H} \int_{\R^6}   \mathbf{1}_{\{      |y-z| \le \frac{t}{R},   |\tilde{y}-\tilde{z}| \le \frac{t}{R}\}}  \mathbf{1}_{[-2,2]} (z) \mathbf{1}_{[-2,2]} (\tilde{z})
   \mathbf{1}_{[-2,2]} (y') \mathbf{1}_{[-2,2]} (\tilde{y}')
   \\
&\qquad\qquad  \times   | y-y'| ^{2H-2} | \tilde{y}-\tilde{y}'| ^{2H-2}  |z-\tilde{z}| ^{2H-2} ~  dydy'  d\tilde{y} d\tilde{y}'   dz d\tilde{z}.
   \end{align*}
 If $K= \sup_{y\in [-3,3]} \int_{-2}^2 | y-y'| ^{2H-2} dy'$,  then for $R\geq R_t + t$,
\[
    \mathbf{I}  \le K^2  R^{6H} \int_{\R^4}   \mathbf{1}_{\{      |y-z| \le \frac{t}{R},   |\tilde{y}-\tilde{z}| \le \frac{t}{R}\}}  \mathbf{1}_{[-2,2]} (z) \mathbf{1}_{[-2,2]} (\tilde{z})
 |z-\tilde{z}| ^{2H-2} ~  dy   d\tilde{y}   dz d\tilde{z}.
 \]
 Finally, integrating in $y$ and $\tilde{y}$, yields for $R\geq R_t + t$,
 \[
    \mathbf{I}  \le 36 K^2  R^{6H-2} \int_{[-2,2]^2}     
 |z-\tilde{z}| ^{2H-2} ~      dz d\tilde{z}.
\]
  As a consequence, 
   \[
   A_2 \le C R^{H-1}
   \]
for $R$ big enough.

\medskip
 \noindent {\it Step 2:} ~ 
 It remains to  estimate the term $A_1$. We will show $A_1 \le C R^{H-1}$    for  $R$ big enough. We begin with a bound for the covariance 
 \[
 {\rm Cov} \Big[ \sigma \big(u(s,y)\big)  \sigma\big(u(s,y')\big) , \sigma \big(u(s,\tilde{y})\big)  \sigma\big(u(s,\tilde{y}')\big) \Big]  \,.
   \]
According to   a version of Clark-Ocone formula for two-parameter processes, we write
  \begin{align*}
\sigma \big(u(s,y)\big)  \sigma\big(u(s,y')\big) &=  \E\big[\sigma \big(u(s,y)\big)  \sigma\big(u(s,y')\big)\big] \\
&\quad+  \int_0^s \int_{\R} \E\Big[ D_{r,z} \Big(\sigma\big(u(s,y)\big)   \sigma\big(u(s,y')\big) \Big) | \mathcal{F}_r \Big] W(dr,dz).
  \end{align*}
  Then,
       \begin{align*}
        &\quad   {\rm Cov} \Big[ \sigma \big(u(s,y)\big)  \sigma\big(u(s,y')\big) , \sigma \big(u(s,\tilde{y})\big)  \sigma\big(u(s,\tilde{y}')\big) \Big]  \\
        & =  \alpha_H \int_0^s \int_{\R^2} \E \Big\{ \E\Big[ D_{r,z} \Big(\sigma\big(u(s,y)\big)   \sigma\big(u(s,y')\big) \Big)| \mathcal{F}_r \Big] \\
        &\qquad\qquad\qquad \times \E\Big[ D_{r,z'} \Big( \sigma\big(u(s,\tilde{y})\big)   \sigma\big(u(s,\tilde{y}')\big)\Big) | \mathcal{F}_r \Big]  \Big\}         |z-z'|^{2H-2}  ~dz dz'dr .
                 \end{align*}
                 Applying  the chain rule for Lipschitz functions (see \cite[Proposition 1.2.4]{Nualart}), we have
                        \begin{align*}
                 D_{r,z} \Big(\sigma\big(u(s,y)\big)   \sigma\big(u(s,y')\big)\Big)  &= \sigma\big(u(s,y)\big) \Sigma(s,y') D_{r,z} u(s,y')\\
              & \qquad  + \sigma\big(u(s,y')\big) \Sigma(s,y) D_{r,z} u(s,y) 
              \end{align*}
               and therefore, $\Big\|  \E\big[ D_{r,z} \big(\sigma (u(s,y) )   \sigma (u(s,y') ) \big)| \mathcal{F}_r \big]    \Big\| _2$ is bounded by 
                 \[
                2K_4(t) L \big\{   \| D_{r,z} u(s,y) \|_4+  \| D_{r,z} u(s,y') \|_4\big\}\, .
              \]
               Applying   Lemma \ref{lemma: iteration},  we get $  | {\rm Cov} \big[ \sigma  (u(s,y) )  \sigma (u(s,y') ) , \sigma  (u(s,\tilde{y}) )  \sigma (u(s,\tilde{y}') ) \big] |
                 $ bounded by 
        \begin{align*}
        &     4L^2 K_4^2(t)    \int_0^s \int_{\R^2 } (  \left\| D_{r,z} u(s,y)\right\|_4 +\left\| D_{r,z} u(s,y')\right\|_4  ) \\
        &\qquad\quad \qquad\qquad \times   \Big(   \| D_{r,z'} u(s,\tilde{y}) \|_4 + \| D_{r,z'} u(s,\tilde{y}') \|_4  \Big)  |z-z'| ^{2H-2} dz dz'dr \\
        &   \le C  \int_0^s \int_{\R^2 }  \Big( \mathbf{1}_{\{ |y-z| \le s-r\}}  + \mathbf{1}_{\{ |y'-z| \le s-r\}}   \Big) \\
        & \qquad\qquad\qquad\qquad \times \Big( \mathbf{1}_{\{ |\tilde{y}-z'| \le s-r\}}  + \mathbf{1}_{\{ |\tilde{y}'-z'| \le s-r\}}   \Big)   |z-z'| ^{2H-2} dz dz'dr . 
                 \end{align*}
So the spatial integral in the expression of $A_1$ can be bounded by 
\begin{align*}
&\quad  \mathbf{J}:=C\int_0^s\int_{\R^6}  \varphi_R(s,y)\varphi_R(s,y') 
\varphi_R(s, \tilde{y})\varphi_R(s, \tilde{y}')     \\
& \quad\quad\quad\times  |y-y'| ^{2H-2}  |\tilde{y}-\tilde{y}'| ^{2H-2}  |z-z'| ^{2H-2}    \Big( \mathbf{1}_{\{ |y-z| \le s-r\}}  + \mathbf{1}_{\{ |y'-z| \le s-r\}}   \Big)  \\
&\qquad \qquad \times\Big( \mathbf{1}_{\{ |\tilde{y}-z'| \le s-r\}}  + \mathbf{1}_{\{ |\tilde{y}'-z'| \le s-r\}}   \Big) dy dy' d\tilde{y}d\tilde{y}' dz dz'dr \\
&=  C\int_0^s  \int_{[-R, R]^4} \int_{\R^6}  \mathbf{1}_{\{   \vert x - y \vert \vee  \vert x' - y' \vert \vee  \vert \tilde{x} -  \tilde{y} \vert  \vee \vert \tilde{x}' -  \tilde{y}' \vert \leq t-s \}       }      \\
& \quad\quad\quad\times  |y-y'| ^{2H-2}  |\tilde{y}-\tilde{y}'| ^{2H-2}  |z-z'| ^{2H-2}     \Big( \mathbf{1}_{\{ |y-z| \le s-r\}}  + \mathbf{1}_{\{ |y'-z| \le s-r\}}   \Big)  \\
&\qquad \qquad \times\Big( \mathbf{1}_{\{ |\tilde{y}-z'| \le s-r\}}  + \mathbf{1}_{\{ |\tilde{y}'-z'| \le s-r\}}   \Big) dx dx' d\tilde{x} d\tilde{x}'   dy dy' d\tilde{y}d\tilde{y}' dz dz'dr \\
&\leq  4Ct  \int_{[-R, R]^4} \int_{\R^6}  \mathbf{1}_{\{   \vert x - y \vert \vee  \vert x' - y' \vert \vee  \vert \tilde{x} -  \tilde{y} \vert  \vee \vert \tilde{x}' -  \tilde{y}' \vert \leq t \}       }          \mathbf{1}_{\{ |y-z| \le t\}}    \mathbf{1}_{\{ |\tilde{y}-z'| \le t\}}     \\
& \quad\quad\qquad\times  |y-y'| ^{2H-2}  |\tilde{y}-\tilde{y}'| ^{2H-2}  |z-z'| ^{2H-2}   dx dx' d\tilde{x} d\tilde{x}'   dy dy' d\tilde{y}d\tilde{y}' dz dz' \,,
\end{align*}                 
          due to symmetry.  Then, it follows from the exactly the same argument as in the estimation of $\mathbf{I}$ in  the previous step that
                $
             \mathbf{J}  $ is bounded by $C R^{6H-2}$   for $R$ big enough. This gives us the desired estimate for $A_1$ and finishes the proof.  \qedhere

 \end{proof}
 
 \section{Proof of Theorem \ref{thm:functional-CLT}}
\label{sec:thm2}
We begin with the following result that ensures tightness. 

\begin{proposition}
\label{pro:tightness}
Let $u(t,x)$ be the solution to equation \eqref{eq:heat-equation}. Then for any $0\leq s < t\leq T$ and any $p\ge 2$, there exists a constant $C_{p,T}$,  depending on $T$ and $p$,  such that for any $R\geq T$,
\begin{align}\label{tendue}
\E \left(~ \left|\int_{-R}^R u(t,x)dx - \int_{-R}^Ru(s,x)dx\right|^p ~ \right) \leq C_{p,T}R^{pH}(t-s)^{p}~ .
\end{align}
\end{proposition}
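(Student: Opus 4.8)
The plan is to prove the moment bound \eqref{tendue} by writing the increment of the spatial average as a single Skorohod (equivalently It\^o--Walsh) integral and then applying the Burkholder--Davis--Gundy (BDG) inequality together with the pointwise estimate of Lemma \ref{lemma: iteration}. First I would observe that, by Fubini's theorem and the mild formulation \eqref{eq: mild}, the difference can be written as
\begin{align*}
\int_{-R}^R u(t,x)\,dx - \int_{-R}^R u(s,x)\,dx
&= \int_0^t\!\int_\R \psi_R(r,y)\,\sigma(u(r,y))\,W(dr,dy),
\end{align*}
where $\psi_R(r,y) = \varphi_R^{(t)}(r,y)\mathbf{1}_{[0,t]}(r) - \varphi_R^{(s)}(r,y)\mathbf{1}_{[0,s]}(r)$ and $\varphi_R^{(\tau)}(r,y) = \tfrac12\int_{-R}^R \mathbf{1}_{\{|x-y|\le \tau-r\}}\,dx$ is the kernel introduced before Lemma \ref{rem1}. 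A direct check from the explicit form of $\varphi_R^{(\tau)}$ shows that $\psi_R$ is supported in $\{|y|\le R+t\}$ and is bounded in absolute value by $C(t-s)$ on its support when $R\ge T$, since $0\le \varphi_R^{(\tau)}(r,y)\le \tau-r$ and the two kernels differ by a length of order $t-s$.

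Next I would apply the BDG inequality for the Dalang--Walsh stochastic integral. Since the integrand $\psi_R\sigma(u)$ is adapted, the $p$-th moment of the stochastic integral is controlled by the $L^{p/2}(\Omega)$-norm of its quadratic variation $\int_0^t \|\psi_R(r,\cdot)\sigma(u(r,\cdot))\|_{\HH_0}^2\,dr$. Taking $L^{p/2}$-norms and using Minkowski's inequality together with the stationarity bound $K_p(T)=\sup_{r\le T,y}\|\sigma(u(r,y))\|_p<\infty$ from \eqref{Kp} reduces matters to estimating the deterministic quantity
\begin{align*}
\int_0^t \Big(\alpha_H \int_{\R^2} |\psi_R(r,y)|\,|\psi_R(r,z)|\,|y-z|^{2H-2}\,dy\,dz\Big)\,dr
\end{align*}
in the case $H>1/2$ (and the analogous $L^2$-in-space expression for $H=1/2$). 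The factor $(t-s)^2$ comes out from the two copies of $|\psi_R|\le C(t-s)$, and the remaining spatial integral is supported in $[-R-t,R+t]^2$; the same rescaling $y\mapsto Ry$, $z\mapsto Rz$ used in the proof of Proposition \ref{pro:covariance2} produces a factor $R^{2H}$ after integrating the kernel $|y-z|^{2H-2}$ over a box of side $O(R)$. Thus the quadratic-variation integral is bounded by $C R^{2H}(t-s)^2$, and raising to the power $p/2$ gives the claimed $R^{pH}(t-s)^p$.

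The main obstacle, and the point requiring the most care, is the control of the kernel difference $\psi_R$: one must verify that subtracting the two light-cone averages $\varphi_R^{(t)}$ and $\varphi_R^{(s)}$ genuinely gains the full factor $(t-s)$ uniformly in $(r,y)$ and $R\ge T$, rather than only a factor $\sqrt{t-s}$. This is where the precise geometry of $\varphi_R^{(\tau)}(r,y)=\tfrac12\big([R\wedge(y+\tau-r)]-[(-R)\vee(y-\tau+r)]\big)_+$ is used: away from the two moving endpoints $\pm R$ the increment in $\tau$ of the interval length is exactly $2(t-s)$ (giving a clean $t-s$ after the factor $\tfrac12$), while near the endpoints it is no larger, so $|\psi_R(r,y)|\le C(t-s)$ holds without any loss. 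Once this Lipschitz-in-time estimate for the kernel is established, the rest is the BDG/Minkowski machinery combined with the $R^{2H}$ scaling already carried out in Section \ref{sec:thm1}, so I do not expect further difficulty. Finally I would record that \eqref{tendue}, for $p>1/H$, yields via the Kolmogorov--Chentsov criterion the tightness of the rescaled processes $R^{-H}G_R(\cdot)$ in $C([0,T])$, which is the use to which this proposition will be put.
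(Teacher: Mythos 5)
Your argument is correct, and for $H=1/2$ it coincides with the paper's proof (same kernel difference $\varphi_{t,R}-\varphi_{s,R}$, same bound $|\psi_R|\le C(t-s)\mathbf{1}_{\{|y|\le R+t\}}$, same BDG/Minkowski reduction). For $H\in(1/2,1)$, however, you take a genuinely different route: you bound $\|\psi_R(r,\cdot)\sigma(u(r,\cdot))\|_{\HH_0}^2$ directly by the double integral $\alpha_H\int|\psi_R(r,y)||\psi_R(r,z)||y-z|^{2H-2}\,dy\,dz$, pull out $(t-s)^2$ from the two kernel factors, and compute $\alpha_H\int_{[-R-t,R+t]^2}|y-z|^{2H-2}\,dy\,dz\le C R^{2H}$ by an elementary integration (or rescaling). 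The paper instead invokes the continuous embedding $L^{1/H}(\R)\hookrightarrow\HH_0$, applies H\"older and Minkowski, and reduces to $\bigl(\int_\R|\psi_R(r,y)|^{1/H}dy\bigr)^{pH}\le C(t-s)^pR^{pH}$. Both yield the same estimate; your version is more self-contained (no appeal to the M\'emin--Mishura--Valkeila/Pipiras--Taqqu embedding), at the mild cost of having to justify passing absolute values and Minkowski's integral inequality through the positive kernel $\alpha_H|y-z|^{2H-2}$, which is routine. Two small remarks: the reference to Lemma \ref{lemma: iteration} in your opening sentence is spurious --- no Malliavin derivative bound is needed here, only the uniform moment bound $K_p(T)=\sup_{r\le T,y}\|\sigma(u(r,y))\|_p<\infty$; and for the Kolmogorov criterion at the end the relevant condition is simply $p>1$ (the exponent of $(t-s)$ must exceed $1$), not $p>1/H$, though this does not affect anything since $p\ge2$.
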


\begin{proof}
Let us assume that $s < t$. We can write
\begin{align*}
2\int_{-R}^R \big[ u(t,x) -   u(s,x)\big]~ dx =
\int_0^T  \int_\R  (\varphi_{t,R}(r,y)  - \varphi_{s,R}(r,y) )\sigma(u(r,y))W(dr,dy),
\end{align*}
where $ 
\varphi_{t,R}(r,y) = \mathbf{1}_{\{r\le t\}}   \int_{-R} ^R   \mathbf{1} _{\{ |x-y| \le t-r\}} dx .
$
      The rest of our proof consists of two parts.

\medskip
\noindent
{\it Step 1:}   ~ Suppose that $H=  1/2$.  Using Burkholder-Davis-Gundy inequality and Minkowski's inequality, we get, for some absolute constant $c_p\in(0,+\infty)$,
\begin{align*}
&\quad \E\left(\left|\int_{-R}^R u(t,x)dx - \int_{-R}^Ru(s,x)dx\right|^p \right)\\
 & \leq   c_p \E\left[ \left( \int_0^T \int_{\R}      \Big(   \varphi_{t,R}(r,y)  - \varphi_{s,R}(r,y) \Big) ^2 \sigma^2\big(u(r,y)\big)  \, dy  dr\right)^{p/2} ~\right]\\
 &   \leq  c_p \left(\int_0^T\int_{\R}      \Big(   \varphi_{t,R}(r,y)  - \varphi_{s,R}(r,y) \Big)^2 \big\|\sigma(u(r,y)) \big\|_p^2  ~dy   dr\right)^{p/2}\\ 
 &    \leq c_p K^p_p(T) \left(\int_0^T\int_{\R}     \Big( \varphi_{t,R}(r,y)  - \varphi_{s,R}(r,y)  \Big)^2 dy
  dr\right)^{p/2}\,,
  \end{align*}
    where $K_p(T)$  has been defined in (\ref{Kp}).  Now we notice that
 \begin{align}
 &\quad \big\vert  \varphi_{t,R}(r,y)  - \varphi_{s,R}(r,y) \big\vert  \notag \\
 &\leq  \mathbf{1}_{\{r\le s\}} \int_{-R} ^R  \left|\mathbf{1}_{\{|x-y| \le t-r\}}
 - \mathbf{1}_{\{|x-y| \le s-r\}} \right| dx  + 
 \mathbf{1}_{\{s< r \le t \}}\int_{-R} ^R \mathbf{1}_{\{|x-y| \le t-r\}}dx  \notag \\
 &\le  2\Big( t-s  +  (t-r)\mathbf{1}_{\{s< r \le t \}} \Big) \mathbf{1}_{ \{ |y| \le R+t\}} \leq  4(t-s) \mathbf{1}_{ \{ |y| \le R+t\}} \,. \label{est11}
  \end{align}
 This implies for $R\geq T$,
\[
 \int_0^T\int_{\R}     \big( \varphi_{t,R}(r,y)  - \varphi_{s,R}(r,y)  \big)^2 dyds \le 64T R(t-s)^2 ,~\text{and thus establishes \eqref{tendue}.}
\]
   
\medskip
\noindent
{\it Step 2:}  ~ Suppose that $H\in (  1/2,1)$.
In the same way, we   write
\begin{align}  \notag  
&\quad  \E\left(\left|\int_{-R}^R u(t,x)dx - \int_{-R}^Ru(s,x)dx\right|^p \right)\\
 &  \leq   c_p \label{a2}
  \E \Bigg[  \Big( \int_0^T   \left\|  \big(\varphi_{t,R}(r,\cdot)  - \varphi_{s,R}(r,\cdot) \big)\sigma\big(u(r,\cdot)\big)  \right\|_{\HH_0}^2 dr\Bigg)^{p/2}\Bigg].
  \end{align}
 As mentioned in Section \ref{sec:prel}, for $H\in(1/2, 1)$, the space $L^{1/H}(\R)$  is  continuously embedded  into     $\mathfrak{H}_0$.   Consequently, there is a constant $C_H>0$, depending on $H$, such that
 \begin{align} \notag
 & \quad \left\|  \big(\varphi_{t,R}(r,\cdot)  - \varphi_{s,R}(r,\cdot) \big)\sigma\big(u(r,\cdot)\big)  \right\|_{\HH_0}^2 \\ \label{a1}
 & \le  C_H   \left( \int_{\R}  \big|\varphi_{t,R}(r,y)  - \varphi_{s,R}(r,y) \big| ^{1/H} |\sigma\big(u(r,y)\big) |^{1/H} dy \right)^{2H}.
 \end{align}
 Substituting (\ref{a1}) into (\ref{a2}) and applying H\"older's and Minkowski's inequalities, we can write
 \begin{align*} 
&\quad  \E\left(\left|\int_{-R}^R u(t,x)dx - \int_{-R}^Ru(s,x)dx\right|^p \right)\\
 &  \le  c_p C_H^{p/2}  T^{p/2-1} 
 \int_0^T  \E \left[ \left( \int_{\R}  \big|\varphi_{t,R}(r,y)  - \varphi_{s,R}(r,y) \big| ^{1/H} |\sigma\big(u(r,y)\big) |^{1/H} dy \right)^{pH} \right]dr\\
 & \le c_p C_H^{p/2}  T^{p/2-1}
  \int_0^T  \left( \int_{\R}  \big|\varphi_{t,R}(r,y)  - \varphi_{s,R}(r,y) \big| ^{1/H} \|\sigma\big(u(r,y)\big) \|_{p} ^{1/H} dy \right)^{pH} dr\\
  &\le c_p C_H^{p/2}  T^{p/2-1} K^p_p(T) 
   \int_0^T    \left( \int_{\R}  \big|\varphi_{t,R}(r,y)  - \varphi_{s,R}(r,y) \big| ^{1/H} dy \right)^{pH} dr.
   \end{align*}
 Finally,  from   \eqref{est11}, which holds true for any $R\geq T$, we can write
 \[
 \left( \int_{\R}  \big|\varphi_{t,R}(r,y)  - \varphi_{s,R}(r,y) \big| ^{1/H} dy \right)^{pH} 
 \le  4^{p(1+H)} (t-s)^p    R^{pH}.
 \]
  It is then straightforward to get \eqref{tendue}.
\end{proof}

\begin{proof}[Proof of Theorem \ref{thm:functional-CLT}]

We need to prove  tightness and the  convergence of the finite-dimensional distributions.   Notice that   tightness   follows   from Proposition \ref{pro:tightness} and the well-known criterion of Kolmogorov.

Let us now show the convergence of the finite-dimensional distributions. We fix   $0\le t_1< \cdots <t_m \le T$ and consider  
\[
F_R(t_i)  := \frac{1}{R^H} \left(\int_{-R}^R u(t_i,x)dx - 2R\right) = \delta\big( v_R^{(i)}\big) ~\text{for $i=1, \dots, m$,}
\]
where   
$$
v_R^{(i)} (s,y) =    \mathbf{1} _{[0,t_i]}(s) \frac{\sigma\big(u(s,y)\big)}{ R^H }\varphi^{(i)}_R(s,y) ~ \text{with}~     \varphi^{(i)}_R(s,y) =   \frac{1}{2} \int_{-R}^R  \mathbf{1}_{\{|x-y| \le t_i -s\}} \,  dx \,.
$$
Set $\mathbf{F}_R=\big( F_R(t_1), \dots, F_R(t_m) \big)$ and let $Z$ be a centered  Gaussian   vector on $\RR^m$ with covariance $(C_{i,j})_{1\leq i,j\leq m}$ given by  
\[
C_{i,j}    := \begin{cases} {\displaystyle 2 \int _0^{t_i \wedge t_j} (t_i-r) (t_j-r)  \xi(r)  ~dr}, \,\,\,\quad\quad\text{if $H=  1/2$;}\\
{\displaystyle 2^{2H} \int _0^{t_i \wedge t_j}   (t_i-r) (t_j-r)  \eta^2(r)  ~dr}, \quad\text{if $H\in (1/2,1)$. }
\end{cases}
\]
We recall here that $\xi(r)= \E\big[ \sigma^2\big(u(r,y)\big)\big]$ and $\eta(r)= \E\big[\sigma\big(u(r,y)\big)\big]$.  Then, we need to show $\mathbf{F}_R$ converges in distribution to $Z$ and   in view of  Lemma \ref{lemma: NP 6.1.2}, it suffices to show that for each $i,j$,   $\langle DF_R(t_i), v_R^{(j)} \rangle_{\HH}$ converges to  $C_{i,j} $   in $L^2(\Omega)$, as $R\to+\infty$.  The case $i=j$ has been tackled before and the other case        can be dealt with by   using   arguments similar  to  those in the proof of Theorem \ref{thm:TV-distance}. For the convenience of readers, we only sketch these arguments  as follows.

We consider two cases: $H=1/2$ and $H\in(1/2, 1)$. In each case, we need to show 
(i) $\E\big[  F_R(t_i) F_R(t_j)  \big] \to C_{i,j}$  and  (ii) ${\rm Var}\big( \langle DF_R(t_i), v_R^{(j)} \rangle_{\HH} \big)\to 0$, as $R\to+\infty$.
Point (i)   has been established in {\it Remark} \ref{rem2}.  To see point (ii) for the case $H=1/2$, we begin with the decomposition $\langle DF_R(t_i), v_R^{(j)} \rangle_{\HH}  = B_1(i,j) +B_2(i,j)  $ with
         \begin{align*}
         B_1(i,j): =  \frac{1}{R}   \int_0^{t_i\wedge t_j} \int_{\R}  \varphi_R^{(i)}(s,y) \varphi_R^{(j)}(s,y)   \sigma ^2(u(s,y)) \,ds\, dy 
\end{align*}
and
\begin{align*}
B_2(i,j):&= \frac { 1 }{R} \int_0^{t_i\wedge t_j} \int_{\R} \varphi^{(j)}_R(s,y) \sigma(u(s,y))  \\
&\qquad  \times\left( \int_s^{t_i}\int_\R \varphi^{(i)}_R(r,z) \Sigma(r,z) D_{s,y} u(r,z) W(dr,dz) \right)  \,    dyds.
\end{align*}
Then using \eqref{FACT00} and going through the same lines as for the estimation of $A_1, A_2$,  we can get 
\begin{align*}
&\quad \sqrt{{\rm Var}\big( B_2(i,j) \big)  } \leq \frac{1}{R} \int_0^{t_i\wedge t_j} ds\Bigg(   \int_{\R^3}  \int_s^{t_i}   \varphi^{(i)}_R(r,z)^2  \varphi^{(j)}_R(s,y)   \varphi^{(j)}_R(s,y') \\
  &\qquad \times
     \E  \Big[    \Sigma^2(r,z) D_{s,y} u(r,z)  D_{s,y'} u(r,z) 
  \sigma\big(u(s,y)\big)\sigma\big(u(s,y')\big) \Big] \, dydy'    dz  dr
   \Bigg)^{  1/2}  \\
   &\leq \frac{C}  {R} \int_0^{t_i\wedge t_j} \Bigg(   \int_{\R^3}  \int_s^{t_i}   \varphi_R^{(i)}(r,z)^2   \varphi^{(j)}_R(s,y)   \varphi^{(j)}_R(s,y') \\
  &\qquad    \qquad\qquad\qquad\times
     \mathbf{1}_{\{|y-z| \vee |y' -z| \le r-s\}} dydy'    dz  dr
   \Bigg)^{  1/2} ds \leq \frac{C}  {\sqrt{R}}.
   \end{align*}
That is, we have ${\rm Var}\big( B_2(i,j) \big) \to 0$, as $R\to+\infty$. We can also get 
\begin{align*}
&\quad \sqrt{{\rm Var}\big( B_1(i,j) \big)  } \leq \frac{1}{R} \int_0^{t_i\wedge t_j}    \Bigg(   \int_{\R^2}  \varphi^{(i)}_R(s,y)  \varphi^{(j)}_R(s,y)  \varphi^{(i)}_R(s,y')\varphi^{(j)}_R(s,y')\\
&\qquad\qquad\qquad\qquad\qquad \qquad \quad \times {\rm Cov} \Big[ \sigma^2 \big(u(s,y)\big)  , \sigma ^2\big(u(s, y')\big)  \Big] dydy'   \Bigg)^{1/2} ds\\
   &\leq \frac{C}{R} \int_0^{t_i\wedge t_j}    \Bigg(   \int_{\R^2}  \varphi^{(i)}_R(s,y)  \varphi^{(j)}_R(s,y)  \varphi^{(i)}_R(s,y')\varphi^{(j)}_R(s,y') \mathbf{1}_{\{ \vert  y - y' \vert \leq 2s \}} dydy'   \Bigg)^{1/2} ds \\
   &\leq \frac{C}{R} \int_0^{t_i\wedge t_j}    \Bigg(   \int_{\R^2} (t_i + t_j)^4  \mathbf{1}_{\{ \vert y\vert \vee \vert y'\vert \leq R + t_i + t_j \}}     \mathbf{1}_{\{ \vert  y - y' \vert \leq 2s \}} dydy'   \Bigg)^{1/2} ds \leq \frac{C}{\sqrt{R}} \,.
   \end{align*}
That is, we have ${\rm Var}\big( B_1(i,j) \big) \to 0$, as $R\to+\infty$.  

To see point (ii) for the case $H\in(1/2, 1)$,  one can begin with the same decomposition and then use \eqref{FACT00} to arrive at similar estimations as those for $\mathbf{I}$ and $\mathbf{J}$. Therefore the same arguments  ensure
$
{\rm Var}\big(\langle DF_R(t_i), v_R^{(j)} \rangle_{\HH}\big)  \leq  CR^{2H-2}
$.     Now the proof of Theorem \ref{thm:functional-CLT}
 is completed. \qedhere
\end{proof}

 \section{Appendix: Proof of  Lemma \ref{lemma: iteration} }
  
  This appendix provides the proof of our technical Lemma and it consists of two parts.  The first part proceeds assuming 
  \begin{align}\label{claimbdd}
\mathfrak{L}:= \sup_{(r,z)\in [0,t]\times \RR} \| D_{s,y} u(r, z) \|_p < +\infty ~\text{for almost every $(s,y)\in\R_+\times\R$}
       \end{align}
and the second part is devoted to establishing  the above bound. Note that \emph{a priori}, we do not know whether $D_{s,y}u(r,z)$ is a function of $(s,y)$ or not in the case where $H\in(1/2, 1)$, so the assumption \eqref{claimbdd} also guarantees that  $D_{s,y}u(r,z)$ is indeed a random function in $(s,y)$; see Section \ref{Sec52} for more explanation.

\subsection{Proof of Lemma \ref{lemma: iteration}  assuming \eqref{claimbdd}}
The proof will be done in two steps.

\medskip
\noindent
 {\it Step 1: Case  $H= 1/2$}.    ~     From (\ref{ecu1}), using Burkholder's and Minkowski's inequality,  we can write
   \begin{align*}  
 &\quad \| D_{s,y}u(t,x) \|_p \\
 & \leq \frac{ K_{p}(t)}{2}  \mathbf{1}_{\{ |x-y| \le t-s\}}      +   \frac{Lc_p}{2}  \left( \int_s^t \int_{\R}  \mathbf{1}_{\{ |x-z| \le t-r \}}    \| D_{s,y} u(r,z) \|_p^2 drdz \right)^{1/2}
\end{align*}
 with   $c_p$ a constant that only depends on $p$. It follows from the elementary inequality $(a+b)^2\leq 2a^2+2b^2$ that 
  \begin{align*}  
 \| D_{s,y}u(t,x) \|_p^2 \le \frac{K^2_{p}(t)}{2}  \mathbf{1}_{\{ |x-y| \le t-s\}}      +   \frac{ L^2c^2_p}{2}  \int_s^t \int_{\R}  \mathbf{1}_{\{ |x-z| \le t-r \}}    \| D_{s,y} u(r,z) \|^2_p drdz.
\end{align*}
Iterating this inequality yields, for any positive integer $M$,
   \begin{align}  
  &\quad \| D_{s,y}u(t,x) \|_p^2    \label{tt1}      \leq  \frac { K^2_{p}(t) } 2 \mathbf{1}_{\{ |x-y| \le t-s\}}  \notag  \\
   &     +\frac { K^2_{p}(t) } 2   \sum_{N=1}^M   \frac {  c_p^{2N} L^{2N}} {2^N}   \int_{\Delta_N(s,t)} \int_{\R^{N}} 
\left(\prod_{n=1} ^{N}  \mathbf{1}_{\{ |z_{n-1}-z_n| \le r_{n-1}-r_n \}} \right)   \mathbf{1}_{\{ |z_{N}-y| \le r_{N}-s \}}      d{\bf r}d{\bf z}  \notag \\
&  +  \frac {  c_p^{2M+2} L^{2M+2}} {2^{M+1}}   \int_{\Delta_{M+1}(s,t)} \int_{\R^{M+1}} 
\left(\prod_{n=1} ^{M}  \mathbf{1}_{\{ |z_{n-1}-z_n| \le r_{n-1}-r_n \}} \right)   \mathbf{1}_{\{ |z_{M}- z_{M+1}| \le r_{M}- r_{M+1} \}}     \notag \\
&\qquad\qquad\qquad\qquad\qquad\qquad\qquad\qquad\qquad \times  \| D_{s,y} u(r_{M+1}, z_{M+1}) \|^p_2\,   d{\bf r}d{\bf z} \,,   \notag
\end{align} 
where  $\Delta_N(s,t) := \big\{ ( r_1,  \dots, r_N)\in\RR^N\vert  s<r_N < r_{N-1} < \cdots <r_1<t\big\}$, $d{\bf r}= dr_1 \cdots dr_{N}$,
$d{\bf z} = dz_1 \cdots dz_{N}$ and with the convention  $r_0=t$ and   $z_{0}=x$. 

Notice that if
\[
\left(\prod_{n=1} ^{N}  \mathbf{1}_{\{ |z_{n-1}-z_n| \le r_{n-1}-r_n \}} \right)  \mathbf{1}_{\{ |z_{N}-y| \le r_{N}-s \}}  \not =0,
\]
 then on $\Delta_N(s,t)$,
$
|x-y| = |z_{0} -y |  \le  \sum_{n=1} ^{N} |z_{n-1} -z_n| + |z_N-y|\le   t-s
$
and similarly on $\Delta_N(s,t)$, $| z_n-y| \le  t-s$ for\footnote{This in particular implies that the contribution of the integration with respect to $dz_n$ is at most $2(t-s)$.} $n=1, \dots, N$. 

Now we deduce from \eqref{claimbdd} that
\begin{eqnarray*}
\| D_{s,y}u(t,x) \|_p^2  &\le&   \mathbf{1}_{\{ |x-y| \le t-s\}} \frac { K^2_{p}(t) }2   \left(
 1 +  \sum_{N=1} ^\infty     c_p^{2N} L^{2N}   
 \frac {(t-s)^{2N} }{N!}      \right)\\
 &\le & \mathbf{1}_{\{ |x-y| \le t-s\}} \frac { K^2_{p}(t) }2  \exp \big(  c_p^2 L^2 t^2\big),
\end{eqnarray*}
which provides the desired estimate. 
  
\medskip
\noindent
{\it  Step 2: Case  $H\in( 1/2, 1)$}.   ~      Proceeding as before,  and using the inequality
  \[
  \| D_{s,y} u(r,z)D_{s,y} u(r,\tilde{z}) \|_{p/2} 
  \le \frac 12 \left( \| D_{s,y} u(r,z) \|_p^2 + \| D_{s,y} u(r,\tilde{z}) \|_p^2 \right),
  \]  
  we obtain
  \begin{align*}  
 &\| D_{s,y}u(t,x) \|_p^2 \le \frac{ K^2_{p}(t) } 2 \mathbf{1}_{\{ |x-y| \le t-s\}}    \\     
& \quad  +   \frac {   c^2_p L^2} 2  \alpha_H \int_s^t \int_{\R^2}  \mathbf{1}_{\{ |x-z| \le t-r \}} 
\mathbf{1}_{\{ |x-\tilde{z}| \le t-r \}}    \| D_{s,y} u(r,z) \|^2_p |z- \tilde{z} | ^{2H-2}  drdz d\tilde{z} .
\end{align*}
By iteration, this leads to the following estimate. For any positive integer $M$,
 \begin{align*}  
 &\| D_{s,y}u(t,x) \|_p^2 \le \frac { K^2_{p}(t) } 2 \mathbf{1}_{\{ |x-y| \le t-s\}}    +  \frac { K^2_{p}(t) } 2 \sum_{N=1} ^M   \frac {  c_p^{2N} L^{2N}} {2^N}  \int_{\Delta_N(s,t)}  \alpha_H^N \int_{\R^{2N}}   \\
 &  \quad
\left(\prod_{n=1} ^{N}   
 \mathbf{1}_{\{ |z_{n-1}-\tilde{z}_n| \vee|z_{n-1}-z_n|   \le r_{n-1}-r_n \}}    |z_n- \tilde{z}_n | ^{2H-2}  \right)    \mathbf{1}_{\{ |z_{N}-y| \le r_{N}-s \}}   d{\bf r}d{\bf z}d{\bf \tilde{z}} \\
 &+ \frac{(Lc_p)^{2M+2}}{2^{M+1}} \int_{\Delta_{M+1}(s,t)} d\mathbf{r} ~ \alpha_H^{M+1} \int_{\R^{2M+2}} d\mathbf{z}d\tilde{\mathbf{z}}    \\
 &  \times  \left( \prod_{n=1} ^{M+1}   
 \mathbf{1}_{\{ |z_{n-1}-\tilde{z}_n| \vee|z_{n-1}-z_n|   \le r_{n-1}-r_n \}}    |z_n- \tilde{z}_n | ^{2H-2}  \right)\| D_{s,y} u(r_{M+1}, z_{M+1})\|_p^2
\end{align*}
 with the same convention as before.   
  Note that Lemma \ref{lem1} implies that on $\Delta_N(s,t)$, 
 \begin{align}
 &  \alpha_H \int_{\RR^2}  \mathbf{1}_{\{  \vert z_{n-1} - z_{n} \vert \vee \vert z_{n-1} - \tilde{z}_{n} \vert  \leq r_{n-1} - r_{n}  \}}  \vert z_{n} - \tilde{z}_n \vert^{2H-2}  dz_{n}d\tilde{z}_n \notag \\
 &\leq \alpha_H \int_{\RR^2}  \mathbf{1}_{\{  \vert z_{n-1} - z \vert \vee \vert z_{n-1} - z' \vert  \leq t \}}  \vert z - z' \vert^{2H-2}  dzdz' \leq 4^H t^{2H} \,; \label{ele3}
 \end{align}
 and note that we again have the following implication:
 \[
\prod_{n=1} ^{N}  \mathbf{1}_{\{ |z_{n-1}-z_n| \le r_{n-1}-r_n \}}   \mathbf{1}_{\{ |z_{N}-y| \le r_{N}-s \}}  \neq0 \Longrightarrow \vert x - y\vert  \leq t -s \,,
\]
which, together with \eqref{ele3}, implies
 \begin{align*}  
 \| D_{s,y}u(t,x) \|_p^2& \le \frac { K^2_{p}(t) } 2 \mathbf{1}_{\{ |x-y| \le t-s\}}    +  \frac { K^2_{p}(t) }{2} \mathbf{1}_{\{ |x-y| \le t-s\}}  \sum_{N=1} ^M   \frac {  (Lc_p 2^H t^{H})^{2N} } {2^N}  \frac{t^N}{N!} \\
 &  \qquad  + \mathfrak{L}^2 ~ \frac {  (Lc_p 2^H t^{H})^{2M+2} } {2^{M+1}}  \frac{t^{M+1}}{(M+1)!} \qquad  \text{($\mathfrak{L}$ is defined in \eqref{claimbdd})} \,.
\end{align*}
 Letting $M\to+\infty$ leads to 
 \begin{align*}
  \| D_{s,y}u(t,x) \|_p^2 & \leq \frac{ K^2_{p}(t) }{2} \mathbf{1}_{\{ |x-y| \le t-s\}}    +  \frac { K^2_{p}(t) }{ 2}\mathbf{1}_{\{ |x-y| \le t-s\}}  \sum_{N=1} ^\infty  \frac {  (Lc_p 2^H t^{H})^{2N} } {2^N}  \frac{t^N}{N!} \\
  &\leq  \frac { K^2_{p}(t) }{2}\exp\big( 2L^2c_p^2 t^{2H+1} \big) \mathbf{1}_{\{ |x-y| \le t-s\}}   \,.
 \end{align*}
This concludes our proof of Lemma \ref{lemma: iteration} assuming \eqref{claimbdd}.
\medskip

\subsection{Proof of \eqref{claimbdd}}\label{Sec52}
 The proof will be done in two steps.

\medskip
\noindent
{\it Step 1: Case  $H=1/2$}.~  It  is well known  in the literature that 
for any $p\ge 2$, 
  $u(t,x)\in\mathbb{D}^{1,p}$ and 
\begin{align}\label{shallimp}
\sup_{(t,x)\in[0,T]\times \RR } \E\big[ \| Du(t,x)\| _{\mathfrak{H}}^p \big] < +\infty \,;
\end{align}
indeed,  in the Picard iteration scheme (see \emph{e.g.} \eqref{Picardit}), one can first prove the iteration $u_n$ converges to the solution $u$ in $L^p(\Omega)$ uniformly in $[0,T]\times \R$, then we   derive the uniform bounded for $\E\big[ \| Du_n(t,x) \| _\mathfrak{H}^p\big]$, so that by standard Malliavin calculus argument, we can get the convergence of $Du_n(t,x)$ to $Du(t,x)$ with respect to the weak topology on $L^p(\Omega ; \mathfrak{H})$ and hence the desired uniform bound \eqref{shallimp}. We omit the details for this case ($H=1/2$) and refer to the arguments for the other case ($H>1/2$).

Consider an approximation of the identity $\big( M_\varepsilon,\varepsilon > 0\big) $   in $L^1(\RR_+\times\RR)$ satisfying
  $M_\varepsilon(s,y) = \varepsilon^{-2} M(s/\varepsilon,y/\varepsilon)$ for some nonnegative   $M\in   C_c(\R_+\times\R)$.
Taking into account that $(\omega, s,y) \to D_{s,y}u(t,x)$ belongs to $L^2( \R_+\times\R ; L^2(\Omega))$, we deduce that
the convolution  $Du(t,x) * M_\varepsilon$ converges to  $ Du(t,x)$ in $L^2( \R_+\times\R ; L^2(\Omega))$, as $\varepsilon$ tends to zero.
Therefore, there exist a sequence $\{\varepsilon_n\}$ such that   $\varepsilon_n \downarrow 0$ and  $(Du(t,x) * M_{\varepsilon_n})(s,y)$  converges almost surely to $D_{s,y}u(t,x)$ for almost all  $(s,y) \in  \R_+\times \R$, as $n\to+\infty$. By Fatou's Lemma, this implies that for almost all $(s,y) \in \R_+\times\R$, 
\begin{equation}
\| D_{s,y} u(t,x) \| _p \le  \sup_{n\in\N} \big\| \big(Du(t,x) \ast M_{\varepsilon_{n}} \big)(s,y) \big\| _p \label{further0}.
\end{equation} 

  Now we fix $(s,y)$ that satisfies \eqref{further0}  and put for $\varepsilon > 0$
  \begin{align} 
Q_\varepsilon(t) :& = \sup_{z\in\R}  \big\| \big(Du(t,z) \ast M_{\varepsilon} \big)(s,y) \big\| _p^2    \label{Q(t)}\\
&= \sup_{z\in\R}   \left\|  \int_{\R_+\times \R}   D_{s',y'} u(t, z)    M_{\varepsilon}(s'-s, y'-y) \, ds'dy' \right\|_p^2\,,~ t\in [0,T]. \notag
  \end{align}
 In the following,
  \begin{enumerate} 
  \item we will prove for each $\varepsilon > 0$, $Q_\varepsilon$ is uniformly bounded on $[0,T]$; 
  \item we  will obtain an integral inequality for $Q_\varepsilon$;
  \item  we will conclude with the classic Gronwall's lemma.
  \end{enumerate}
  Recall from \eqref{ecu1} and we can write 
  \begin{align*}
&\big(Du(t,z) \ast M_{\varepsilon} \big)(s,y) = \frac{1}{2} \int_{\R_+\times \R}{\bf 1}_{\{  | z - y' | \leq t -s' \}} \sigma\big(u(s',y') \big)M_{\varepsilon}(s'-s, y'-y)  ds'dy' \\
&\qquad\qquad   +  \frac{1}{2} \int_0^t\int_\R {\bf 1}_{\{  | z - \xi | \leq t -a \}}\Sigma(a,\xi)  \big( Du(a,\xi)\ast M_{\varepsilon}\big)(s,y)    W(da,d\xi)  .  
  \end{align*}
    Then, using   Burkholder's inequality  and Minkowski's inequality in the same way as before,  we can arrive at 
  \begin{align*}
   Q_\varepsilon(t) & \leq \frac{K_p^2(t)}{2} \left(\sup_{z\in\R} \int_{\R_+\times \R} \mathbf{1}_{\{ \vert z - y'\vert \leq t-s' \}}  M_{\varepsilon}(s'-s, y'-y) \, ds'dy' \right)^2 \\
    &\qquad + L^2c_p^2 t \int_0^t Q_\varepsilon(a)\, da \\
  &\leq   \frac{K_p^2(t)}{2} \| M \| _{L^1(\R_+\times\R)} ^2 + L^2c_p^2 t \int_0^t Q_\varepsilon(a)\, da, ~ t\in[0,T].
  \end{align*}
  We know from \eqref{shallimp}  and Cauchy-Schwarz inequality that 
  \[
  Q_\varepsilon(t) \leq \| M_\varepsilon\|_\mathfrak{H}^2 \sup_{z\in\R} \Big( \E\big[ \| Du(t,z)\|_\mathfrak{H}^p \big] \Big)^{2/p} \,,
  \]
  which is uniformly bounded on $[0,T]$.   Then it follows from  Gronwall's lemma that 
  \[
   Q_\varepsilon(t) \leq  \frac{K_p^2(t)}{2}e^{L^2c_p^2 t}   \| M \| _{L^1(\R_+\times\R)} ^2, ~ \forall t\in[0,T].
  \]
The above bound is independent of $\varepsilon$, thus  we can further deduce that 
  \[
   \sup_{(r,z)\in[0,t]\times\R}\| D_{s,y} u(r, z) \|_p^2 \leq  \frac{K_p^2(t)}{2}e^{L^2c_p^2 t}   \| M \| _{L^1(\R_+\times\R)} ^2 < +\infty \,.
  \]
  That is, claim \eqref{claimbdd} is established  for the case $H=1/2$.\\

  \medskip
  \noindent
  {\it   Step 2: Case  $H\in(1/2,1)$}. ~ 
  In this case we have first to show that $Du(t,x)$ is an element of $L^2( \Omega \times \R_+ \times \R)$ and for this we will use the Picard iterations. 
  Let $u_0(t,x) =1$ and  for $n\ge 0$, set 
  \begin{align}
  u_{n+1}(t,x) = 1 + \frac{1}{2} \int_0^t \int_\R \mathbf{1}_{\{ \vert x - y \vert \leq t-s\}} \sigma\big( u_n(s,y) \big) \, W(ds, dy) \,. \label{Picardit}
  \end{align}
 It is routine to show that  for any given $T\in\R_+$,
 \begin{align}\label{UNILP}
 \lim_{n\to+\infty} \sup_{(t,x)\in[0,T]\times\RR }      \| u(t,x) - u_{n}(t,x) \|_p    =0 \,.
 \end{align}
                  We know that   
 for each $n\geq 0$, $u_n(t,x)\in \mathbb{D}^{1,p}$ with
   \begin{align*}  
&\quad D_{s,y}u_{n+1}(t,x)=        \frac{1}{2}  \mathbf{1}_{\{ \vert x - y \vert \leq t - s \} } \sigma\big( u_{n}(s,y) \big)   \\
&\qquad\qquad\qquad\qquad \qquad +  \frac{1}{2} \int_s^t \int_\RR \mathbf{1}_{\{ \vert x - z \vert \leq t - r \} }  \Sigma_n(r,z)   D_{s,y} u_n(r,z)  \, W(dr, dz) \,,
\end{align*}
with $\Sigma_n(r,z)  $ being an adapted process bounded by $L$.
Thus, using Burkholder's inequality, Minkowski's inequality and 
 the easy inequality 
 \begin{align}
 \| XY\|_{p/2} \leq \frac{1}{2} \| X \| _p^2 +\frac{1}{2} \| Y \| _p^2  \label{ele2}
 \end{align} 
 for any $X, Y\in L^p(\Omega)$,  we   get $ \| D_{s,y}u_{n+1}(t,x) \|_p^2 $ bounded by 
   \begin{align*}  
  \frac{ \widetilde{K}^2_{p}(t)  }{2}  \mathbf{1}_{\{ |x-y| \le t-s\}}      +   \frac{L^2c_p^2}{2}   \int_s^t  \alpha_H \int_{\RR^2}  & \mathbf{1}_{\{ |x-z| \vee \vert x - z' \vert  \le t-r \}} \vert z- z'\vert^{2H-2}  \\
  &\qquad\qquad\qquad  \times \| D_{s,y} u_n(r,z) \|_p^2 ~ drdzdz' \,,
\end{align*}
 where $\widetilde{K}_p(t):=\sup\big\{  \| \sigma ( u_n(s,x)  )  \| _p \,: n\geq 0,  (s,x)\in[0, t]\times \RR\big\} $.   Iterating this procedure gives us
 \begin{align*}   
  &  \| D_{s,y}u_{n+1}(t,x) \|_p^2      \leq  \frac { \widetilde{K}^2_{p}(t)  }{2} \mathbf{1}_{\{ |x-y| \le t-s\}}       +\frac {\widetilde{K}^2_{p}(t)  }{2}   \sum_{\ell=1}^n   \frac {  c_p^{2\ell} L^{2\ell}} {2^\ell}  \int_{\Delta_\ell(s,t)}    d{\bf r}  \alpha_H^\ell \int_{\R^{2\ell}} \\
  &  \quad\times 
\left(\prod_{k=0} ^{\ell-1}  \mathbf{1}_{\{ |z_{k}-z_{k+1}|  \vee |z_k-z_{k+1}'|    \le r_{k-1}-r_k \}}  \vert z_{k+1} - z_{k+1}' \vert^{2H-2} \right)   \mathbf{1}_{\{ |z_{\ell}-y| \le r_{\ell}-s \}}     d{\bf z}' d{\bf z}    \,.
\end{align*} 
 Again, it is easy to see the following implication holds:
   $$  \mathbf{1}_{\{ |z_{\ell}-y| \le r_{\ell}-s \}}     \prod_{k=0} ^{\ell-1}  \mathbf{1}_{\{ |z_{k}-z_{k+1}|  \vee |z_k-z_{k+1}'|    \le r_{k-1}-r_k \}} \neq 0 \Longrightarrow \vert x - y\vert  \leq t-s \,,$$   
 therefore
  \begin{align*}   
  &  \| D_{s,y}u_{n+1}(t,x) \|_p^2      \leq  \frac { \widetilde{K}^2_{p}(t)  }{2} \mathbf{1}_{\{ |x-y| \le t-s\}}       +\frac {\widetilde{K}^2_{p}(t)  }{2}  \mathbf{1}_{\{ |x-y| \le t-s \}}  \sum_{\ell=1}^n   \frac {  c_p^{2\ell} L^{2\ell}} {2^\ell}  \int_{\Delta_\ell(s,t)}    d{\bf r}  \\
  & \qquad\qquad  \times  \alpha_H^\ell \int_{\R^{2\ell}}
\left(\prod_{k=0} ^{\ell-1}  \mathbf{1}_{\{ |z_{k}-z_{k+1}|  \vee |z_k-z_{k+1}'|    \le r_{k-1}-r_k \}}  \vert z_{k+1} - z_{k+1}' \vert^{2H-2} \right)       d{\bf z}' d{\bf z}   \\
&\leq    \frac { \widetilde{K}^2_{p}(t)  }{2} \mathbf{1}_{\{ |x-y| \le t-s\}}       +\frac {\widetilde{K}^2_{p}(t)  }{2}  \mathbf{1}_{\{ |x-y| \le t-s \}}  \sum_{\ell=1}^n   \frac {  c_p^{2\ell} L^{2\ell}} {2^\ell} (4^H t^{2H+1})^\ell  \frac{1}{\ell!},
\end{align*} 
 where the last inequality is  a consequence of \eqref{ele3}. We conclude that
 \begin{align}
 \| D_{s,y}u_{n+1}(t,x) \|_p^2   \leq   \frac { \widetilde{K}^2_{p}(t)  }{2} e^{ 2 t^{2H+1} c_p^2 L^2 }  \mathbf{1}_{\{ |x-y| \le t-s\}}   =: C   \mathbf{1}_{\{ |x-y| \le t-s\}} . \label{DnBDD}
 \end{align}
 It follows immediately from Minskowski's inequality and \eqref{DnBDD}    that 
 \[
 \E\big[ \| Du_n(t,x) \| _{L^2(\R_+\times\R)}^p  \big] \leq  \left( \int_{\R_+\times\R} \| D_{s,y} u_n(t,x) \| _p^2 \, dsdy \right)^{p/2} \leq  \big( C t^2 \big)^{p/2}
 \]
{\it uniformly} in $n\geq 1$ and {\it uniformly} in $x\in\R$. In particular, $\big\{ Du_n(t,x) , n\geq 1 \big\}$ is uniformly bounded in $L^p(\Omega ; L^2(\R_+\times\R))$. Note that the   convergence in  \eqref{UNILP} and  standard Malliavin calculus arguments  can lead us to the fact that up to some subsequence, $Du_n(t,x)$ converges to $Du(t,x)$ in the weak topology of $L^p(\Omega ; L^2(\R_+\times\R))$, so we can conclude that $D_{s,y}u(t,x)$ is indeed a function in $(s,y)$ and for any fixed $T \in\R_+$,
\[
\sup_{(t,x)\in[0,T]\times\R}  \E\big[ \| Du(t,x) \| _{L^2(\R_+\times\R)}^p  \big] < +\infty \,.
\]
  Now we   use the same approximation of the identity $( M_\varepsilon )$ and obtain for almost every $(s,y)\in\R_+\times\R$,
\[
\| D_{s,y} u(t,x) \| _p^2 \leq \sup_{\varepsilon > 0}   \left\|  \int_{\R_+\times \R}   D_{s',y'} u(t, x)    M_{\varepsilon}(s'-s, y'-y) \, ds'dy' \right\|_p^2 \,.
\]
 Let $\varepsilon > 0$ be fixed and let $Q_\varepsilon(t)$ be defined as in \eqref{Q(t)}, we have in this case, applying  Lemma \ref{lem1},
\begin{align*}
 Q_\varepsilon(t) 
&\leq \frac{1}{2} K^2_p(t) \| M \|^2_{L^1(\R_+\times\R)} \\
&  \quad + \frac{L^2c_p^2}{2}  \int_0^t\alpha_H\int_{\R^2} \mathbf{1}_{\{ \vert x -z \vert\vee \vert x-z'\vert\leq t-r\}} \vert z-z'\vert^{2H-2} Q_\varepsilon(r)\, dr dzdz'  \\
&\leq \frac{1}{2} K^2_p(t) \| M \|^2_{L^1(\R_+\times\R)} +  \frac{L^2c_p^2 4^H t^{2H}}{2} \int_0^tQ_\varepsilon(r)\, dr.
\end{align*} 
Similarly as in previous case, we have 
\[
Q_\varepsilon(t) \leq \| M_\varepsilon \| _{L^2(\R_+\times\R)}^2 \Big( \E\big[ \| Du(t,x) \| _{L^2(\R_+\times\R)}^p  \big]  \Big)^{2/p}
\]
 so that the same application of Gronwall's lemma gives 
  \[
   \sup_{(r,z)\in[0,t]\times\R}\| D_{s,y} u(r, z) \|_p^2 \leq  \frac{K_p^2(t)}{2}e^{2L^2c_p^2 t^{2H}}   \| M \| _{L^1(\R_+\times\R)} ^2 < +\infty \,.
  \]
  That is, claim \eqref{claimbdd} is also established  for the case $H\in(1/2,1)$.\hfill $\square$

 \medskip
 \noindent{\bf Acknowledgement:} We would like to thank two anonymous referees for their helpful remarks and  in particular to one of them for detecting a mistake in the proof of Theorem
 \ref{thm:TV-distance} and for providing a generous amount of comments that improve our paper.

\end{document}